\DeclareFontFamily{OT1}{pzc}{}
\DeclareFontShape{OT1}{pzc}{m}{it}{<-> s * [1.10] pzcmi7t}{}
\DeclareMathAlphabet{\mathpzc}{OT1}{pzc}{m}{it}
\let\originalleft\left
\let\originalright\right
\renewcommand{\left}{\mathopen{}\mathclose\bgroup\originalleft}
\renewcommand{\right}{\aftergroup\egroup\originalright}
\begin{document}

\newcommand\bO{{\bf 0}}
\newcommand\ee{\varepsilon}
\newcommand\co{\mathpzc{o}}
\newcommand\cF{\mathcal{F}}
\newcommand\cO{\mathcal{O}}
\newcommand\re{{\rm e}}

\newcommand{\myStep}[2]{{\bf Step #1} --- #2\\}

\newtheorem{theorem}{Theorem}[section]
\newtheorem{corollary}[theorem]{Corollary}
\newtheorem{lemma}[theorem]{Lemma}
\newtheorem{proposition}[theorem]{Proposition}
\newtheorem{conjecture}[theorem]{Conjecture}

\theoremstyle{definition}
\newtheorem{definition}{Definition}[section]
\newtheorem{example}[definition]{Example}

\theoremstyle{remark}
\newtheorem{remark}{Remark}[section]




\title{
On the stability of boundary equilibria in Filippov systems.
}
\author{
D.J.W.~Simpson\\\\
School of Fundamental Sciences\\
Massey University\\
Palmerston North\\
New Zealand
}
\maketitle


\begin{abstract}

The leading-order approximation to a Filippov system $f$ about a generic boundary equilibrium $x^*$
is a system $F$ that is affine one side of the boundary and constant on the other side.
We prove $x^*$ is exponentially stable for $f$ if and only if it is exponentially stable for $F$
when the constant component of $F$ is not tangent to the boundary.
We then show exponential stability and asymptotic stability are in fact equivalent for $F$.
We also show exponential stability is preserved under small perturbations to the pieces of $F$.
Such results are well known for homogeneous systems.
To prove the results here additional techniques are required because the two components of $F$ have different degrees of homogeneity.
The primary function of the results is to reduce the problem of the stability of $x^*$ from
the general Filippov system $f$ to the simpler system $F$.
Yet in general this problem remains difficult.
We provide a four-dimensional example of $F$
for which orbits appear to converge to $x^*$ in a chaotic fashion.
By utilising the presence of both homogeneity and sliding motion
the dynamics of $F$ can in this case be reduced to the combination of a one-dimensional return map and a scalar function.

\end{abstract}

\section{Introduction}
\label{sec:intro}
\setcounter{equation}{0}

Filippov systems are piecewise-smooth vector fields
for which evolution on discontinuity surfaces (termed sliding motion) is permitted and defined
by appropriately averaging the neighbouring smooth components of the vector field.
Such systems provide useful mathematical models for a wide variety of physical phenomena
that switch between two or more modes of operation \cite{DiBu08}.
Sliding motion represents the idealised limit that the time between consecutive switching events is zero.

As the parameters of a Filippov system are varied in a continuous fashion,
a {\em regular equilibrium} (zero of a smooth component of the vector field) can collide with a discontinuity surface.
This is known as a {\em boundary equilibrium bifurcation}.
Generically one {\em pseudo-equilibrium} (zero of the vector field for sliding motion) emanates from the bifurcation.
At the bifurcation it is possible for a stable regular equilibrium to simply transition to a stable pseudo-equilibrium,
but other invariant sets can be created, such as limit cycles and even chaotic sets \cite{Gl18}.
Most critically, even if the regular equilibrium is stable (in fact exponentially stable)
it is possible that no attractor exists on the other side of the bifurcation (multiple attractors are also possible \cite{Si18d}).

In order to determine the dynamics near a boundary equilibrium bifurcation,
it is helpful to know whether or not the equilibrium at the bifurcation is stable.
In particular, if the boundary equilibrium is asymptotically stable
then a local attractor must exist on both sides of the bifurcation \cite{DiNo08}.

For two-dimensional systems this stability problem is straight-forward.
There are eight topologically distinct cases for a generic boundary equilibrium \cite{Fi88}.
In two of these cases the equilibrium is exponentially stable, while in the remaining six it is unstable.
In higher dimensions such a characterisation is likely to be unachievable.
In \cite{Si18d} a three-dimensional system was given for which
both the regular and pseudo-equilibria are asymptotically stable (for their associated smooth vector fields)
but the boundary equilibrium is unstable.

Arguably the most useful tool for showing that a boundary equilibrium is stable is a Lyapunov function \cite{Jo03}.
Lyapunov functions are widely employed in control theory,
although the presence of sliding motion adds some complexity to this approach \cite{DeRo14,IeVa15,MoPy89}.
Existing methods search for Lyapunov functions within some class,
so if a Lyapunov function is not found then no conclusion can be made about stability.

\begin{figure}[b!]
\begin{center}
\includegraphics[height=4.5cm]{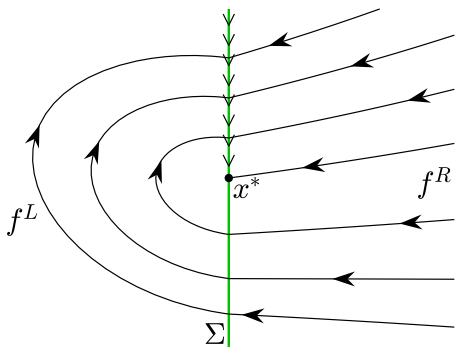}
\caption{
A phase portrait of a two-dimensional Filippov system with an exponentially stable boundary equilibrium $x^*$.
To the left [right] of the discontinuity surface $\Sigma$,
the dynamics is governed by $\dot{x} = f^L(x)$ [$\dot{x} = f^R(x)$].
The central point $x^* \in \Sigma$ is a zero of $f^L$ but not of $f^R$.
On $\Sigma$ orbits above $x^*$ slide towards $x^*$.
\label{fig:expStabSchem_a}
} 
\end{center}
\end{figure}

The results of this paper simplify the stability problem by justifying the removal of higher-order terms.
A smooth discontinuity surface $\Sigma$ of a Filippov system $f$
locally divides phase space into regions where two different smooth components apply, call them $f^L$ and $f^R$.
A boundary equilibrium is a point $x^* \in \Sigma$ that is a zero of one of these vector fields, say $f^L$,
as in Fig.~\ref{fig:expStabSchem_a}.
By replacing $f^L$ and $f^R$ with their leading-order approximations about $x^*$, we obtain a reduced system $F$.
Specifically $f^L$ is replaced by the affine vector field $D f^L(x^*) (x - x^*)$,
while $f^R$ is replaced by the constant vector field $f^R(x^*)$.
Below we prove the following three results.
First we show that $x^*$ is exponentially stable for $f$ if and only if it is exponentially stable for $F$
assuming $f^R(x^*)$ is not tangent to $\Sigma$ (Theorem \ref{th:expStabEquiv}).
Second we show that, although asymptotic stability is a weaker form of stability than exponential stability,
these two types of stability are in fact equivalent for $F$ (Theorem \ref{th:asyStabImpliesExpStab}).
Third we show that exponential stability of $x^*$ for $F$ is robust to small perturbations in the entries of $D f^L(x^*)$ and $f^R(x^*)$
(Theorem \ref{th:robust}).

These results are well-known when $x^*$ is instead a zero of both $f^L$ and $f^R$ (so both pieces of $F$ are affine)
and more generally for perturbations of homogeneous vector fields \cite{Fi88,LaSt71}.
Recall a vector field $g$ is {\em homogeneous of degree $d$} if
$g(\gamma x) = \gamma^d g(x)$ for all $\gamma > 0$ and $x \in \mathbb{R}^n$.
For our vector field $F$ with $x^*$ located at the origin,
one component of $F$ is homogeneous of degree $1$ while the other component is homogeneous of degree $0$.
For this reason additional arguments are required to obtain the results.

The remainder of this paper is organised as follows.
In \S\ref{sec:mainResult} we clarify notation, provide some basic definitions,
and precisely state the main results.
The next few sections prepare proofs of these results.
Since stability is defined in terms of Filippov solutions
we start in \S\ref{sec:euc} by recalling (from \cite{Fi88}) some fundamental theorems on the existence, uniqueness, and robustness of Filippov solutions.
Then in \S\ref{sec:sliding} we prove a lemma formally connecting sliding motion to the definition of a Filippov solution.

To prove Theorem \ref{th:expStabEquiv} we perform a spatial scaling to `blow-up' the origin.
For homogeneous systems this spatial scaling can be equated to a simple rescaling of time.
Indeed this is why at different distances from the origin solutions follow the same paths (just at different speeds).
The same behaviour occurs in our setting except, as shown in \S\ref{sec:lh}, the required time scaling is a discontinuous function of $x$.
Accommodating this discontinuity is the main technical hurdle that must be overcome
and this is achieved in \S\ref{sec:timeScaling} by applying tools from real analysis.
Then in \S\ref{sec:mainProof} we prove Theorems \ref{th:expStabEquiv}--\ref{th:robust}.

In order to highlight the complexity of the problem of the stability of $x^*$ for $F$,
in \S\ref{sec:chaos} we introduce a four-dimensional example
for which numerical simulations suggest Filippov solutions converge to the origin in a chaotic fashion.
This phenomenon does not appear to have been described before, possibly because it requires a relatively large number of dimensions (four),
but we believe it occurs generically, i.e.~our example is not special.
The system can be well understood because, rather remarkably,
the dynamics can be reduced from four dimensions down to only one dimension via the construction of a return map.
A return map (or Poincar\'e map) always foments the loss of one dimension.
For our example, Filippov solutions repeatedly undergo sliding motion.
Since this occurs on a codimension-one surface we are able to remove another dimension
(this technique is also employed in \cite{Gl18,GlJe15}).
By also utilising the (piecewise) homogeneity of $F$ we can remove a third dimension.

Finally in \S\ref{sec:conc} we provide a conjecture regarding Lyapunov stability
and summarise how the different types of stability for $f$ and $F$ imply one another.

\section{Preliminaries and main results}
\label{sec:mainResult}
\setcounter{equation}{0}

Here we first clarify some basic notation, \S\ref{sub:notation}.
Then in \S\ref{sub:FilippovGeneral} we define Filippov solutions and the stability of equilibria.
Then we consider an arbitrary boundary equilibrium of a Filippov system $f$
and construct the approximation $F$, \S\ref{sub:be}.
Lastly in \S\ref{sub:results} we state the main results, Theorems \ref{th:expStabEquiv}--\ref{th:robust}.

\subsection{Key notation}
\label{sub:notation}

We consider $\mathbb{R}^n$ ($n \ge 1$) with the $n$-dimensional Lebesgue measure and a norm $\| \cdot \|$.
The origin is denoted $\bO$.
Open and closed balls centred at $x \in \mathbb{R}^n$ with radius $\delta > 0$ are denoted
\begin{align*}
B_\delta(x) &= \left\{ y \in \mathbb{R}^n \,\big|\, \| x - y \| < \delta \right\}, \\
\overline{B}_\delta(x) &= \left\{ y \in \mathbb{R}^n \,\big|\, \| x - y \| \le \delta \right\}.
\end{align*}
A function $E : \mathbb{R}^n \to \mathbb{R}^n$ is
\begin{align*}
\text{$\co(x)$ if} \lim_{x \to \bO} \frac{\| E(x) \|}{\| x \|} &= 0, \\
\text{and $\cO(x)$ if} \limsup_{x \to \bO} \frac{\| E(x) \|}{\| x \|} &< \infty.
\end{align*}

\subsection{Filippov solutions and the stability of equilibria}
\label{sub:FilippovGeneral}

Let $\Omega \subset \mathbb{R}^n$ be open and connected
and let $\Sigma \subset \Omega$ be a set with zero measure.
Given a function $f : \Omega \setminus \Sigma \to \mathbb{R}^n$ that is measurable and bounded on any bounded subset of $\Omega \setminus \Sigma$,
we are interested in solutions to the system $\dot{x} = f(x)$.
To this end, following Filippov \cite{Fi88,Fi60}, we define
the set-valued function:
\begin{equation}
\cF(x) = \bigcap_{\delta > 0} {\rm co} \left[ f \left( B_\delta(x) \setminus \Sigma \right) \right],
\label{eq:cFgeneral}
\end{equation}
where ${\rm co}[U]$ denotes the smallest closed convex set containing $U \subset \mathbb{R}^n$.
For each $x \in \Omega$, $\cF(x)$ represents the smallest closed convex set containing all
limiting values $\lim_{i \to \infty} f(x_i)$, where $x_i \to x$ with $x_i \notin \Sigma$.

\begin{definition}
An absolutely continuous function\footnote{
A function $\phi$ is {\em absolutely continuous} on $[a,b]$
if for all $\ee > 0$ there exists $\delta > 0$ such that
$\sum_i \| \phi(b_i) - \phi(a_i) \| < \ee$ for any finite collection of pairwise disjoint intervals
$[a_i,b_i] \subset [a,b]$ satisfying $\sum_i (b_i - a_i) < \delta$.
Note that absolutely continuous functions are differentiable almost everywhere.
}
$\phi : [a,b] \to \Omega$ is a {\em Filippov solution}
to $\dot{x} = f(x)$ if $\dot{\phi}(t) \in \cF(\phi(t))$ for almost all $t \in [a,b]$.
\label{df:FilippovSolution}
\end{definition}

\begin{definition}
A point $x^* \in \Omega$ is an {\em equilibrium} of $\dot{x} = f(x)$
if $\phi(t) = x^*$ is a Filippov solution to $\dot{x} = f(x)$ for all $t \in \mathbb{R}$.
\end{definition}

\begin{definition}
An equilibrium $x^* \in \Omega$ of $\dot{x} = f(x)$ is said to be
\begin{enumerate}
\item
{\em Lyapunov stable} if for all $\ee > 0$ there exists $\delta > 0$ such that
every Filippov solution $\phi(t)$ with $\phi(0) \in B_\delta(x^*)$ has $\phi(t) \in B_\ee(x^*)$ for all $t \ge 0$;
\item
{\em asymptotically stable} if it is Lyapunov stable and there exists $\delta > 0$ such that
every Filippov solution $\phi(t)$ with $\phi(0) \in B_\delta(x^*)$ has $\phi(t) \to x^*$ as $t \to \infty$;
\item
{\em exponentially stable} if there exists $\alpha \ge 1$, $\beta > 0$, and $\delta > 0$ such that
every Filippov solution $\phi(t)$ with $\phi(0) \in B_\delta(x^*)$ has
$\| \phi(t) - x^* \| \le \alpha \re^{-\beta t} \| \phi(0) - x^* \|$ for all $t \ge 0$.
\end{enumerate}
\label{df:stable}
\end{definition}

\subsection{Boundary equilibria}
\label{sub:be}

The phase space of an $n$-dimensional piecewise-smooth system $\dot{x} = f(x)$
contains $(n-1)$-dimensional {\em discontinuity surfaces}
that divide the space into regions where $f$ is smooth.
In a neighbourhood of a point on exactly one discontinuity surface
only two smooth subsystems of $f$ are involved.
We can choose coordinates such that, at least locally, this discontinuity surface
is the coordinate plane $x_1 = 0$ (where $x_1$ is the first component of $x$).
Then, in this neighbourhood, the system takes the form
\begin{equation}
\dot{x} = f(x) = \begin{cases}
f^L(x), & x_1 < 0, \\
f^R(x), & x_1 > 0.
\end{cases}
\label{eq:f}
\end{equation}
Now let $\Omega \subset \mathbb{R}^n$ be an open, connected set and let
\begin{equation}
\Sigma = \left\{ x \in \Omega \,\big|\, x_1 = 0 \right\}
\nonumber
\end{equation}
be the part of the discontinuity surface that belongs to $\Omega$.
Also let
\begin{equation}
\begin{split}
\Omega_L &= \left\{ x \in \Omega \,\big|\, x_1 < 0 \right\}, \\
\Omega_R &= \left\{ x \in \Omega \,\big|\, x_1 > 0 \right\}.
\end{split}
\nonumber
\end{equation}
Below we assume
\begin{equation}
\begin{split}
f^L &~\text{is}~ C^k ~\text{in}~ \Omega_L \cup \Sigma, \\ 
f^R &~\text{is}~ C^k ~\text{in}~ \Omega_R \cup \Sigma, 
\end{split}
\label{eq:Ck}
\end{equation}
where different results use different values of $k \ge 0$.

Suppose $\bO \in \Sigma$ and $f^L(\bO) = \bO$ (that is, $\bO$ is a {\em boundary equilibrium} of \eqref{eq:f}).
Assuming $f^L$ and $f^R$ are $C^1$ at $x = \bO$, we can write
\begin{equation}
\begin{split}
f^L(x) &= A x + \co \left( \| x \| \right), \\
f^R(x) &= c + \cO \left( \| x \| \right),
\end{split}
\label{eq:fLfR}
\end{equation}
where $A = D f^L(\bO)$ is an $n \times n$ matrix and $c = f^R(\bO) \in \mathbb{R}^n$.
By removing the higher-order terms from \eqref{eq:fLfR} we obtain the approximation
\begin{equation}
\dot{x} = F(x) = \begin{cases}
A x, & x_1 < 0, \\
c, & x_1 > 0.
\end{cases}
\label{eq:F}
\end{equation}

\subsection{Main results}
\label{sub:results}

\begin{theorem}
Consider \eqref{eq:f} satisfying \eqref{eq:Ck} with $k = 1$ and \eqref{eq:fLfR} with $c_1 < 0$.
Then $\bO$ is an exponentially stable equilibrium of \eqref{eq:f}
if and only if it is an exponentially stable equilibrium of \eqref{eq:F}.
\label{th:expStabEquiv}
\end{theorem}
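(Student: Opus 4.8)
The plan is to prove both implications at once from a single comparison between the Filippov solutions of $f$ and those of $F$, obtained by blowing up the origin. Fix a small $\gamma > 0$ and substitute $x = \gamma y$; in the $y$-coordinates the left piece of $\dot{x} = f(x)$ becomes $\frac{1}{\gamma} f^L(\gamma y) = A y + \frac{1}{\gamma} \co(\gamma \|y\|)$, which tends to $A y$ uniformly on bounded sets as $\gamma \to 0$. The right piece becomes $\frac{1}{\gamma} f^R(\gamma y) = \frac{1}{\gamma} c + \cO(\|y\|)$, which diverges as $\gamma \to 0$ --- this blow-up is exactly the consequence of $f^R$ being homogeneous of degree $0$ rather than $1$. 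To compensate I introduce a reparametrisation of time whose rate equals $1$ while the solution is in $\Omega_L$ but is of order $\frac{1}{\gamma}$ while it is in $\Omega_R$ (together with a suitable bounded positive factor while the solution slides on $\Sigma$, chosen so the reparametrised sliding field matches that of $F$); with this rescaling the right piece becomes $f^R(\gamma y) = c + \cO(\gamma \|y\|) \to c$, and the rescaled system converges, in the robustness sense recalled in \S\ref{sec:euc}, to $\dot{y} = F(y)$ as $\gamma \to 0$. Crucially this convergence is uniform over \emph{all} sufficiently small scales $\gamma$, since the $\co$ and $\cO$ bounds are uniform on a fixed shell such as $\tfrac{1}{2} \le \|y\| \le 2$. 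The hypothesis $c_1 < 0$ enters here to keep the dynamics near $\bO$ tame: it keeps $f^R$ transverse to $\Sigma$, so the crossing region, the attracting sliding region, and the sliding field are all non-degenerate near $\bO$; the lemma of \S\ref{sec:sliding} identifying sliding motion with Filippov solutions is what lets us treat the dynamics on $\Sigma$ within this framework.

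Granting the comparison, each implication follows by the standard ``one contraction step, then iterate over dyadic shells'' argument. Since $F$ is piecewise homogeneous, an exponential estimate valid near $\bO$ propagates to all of $\mathbb{R}^n$ (the piecewise-homogeneous scaling preserves the orbits of $F$ and, under the reparametrisation, only shortens the physical elapsed time, so the exponential bound is not worsened). Hence if $\bO$ is exponentially stable for $F$ there are a fixed time $S$ and a factor $\rho \in (0,1)$ such that every Filippov solution of $F$ starting in the shell $\tfrac{1}{2} \le \|y\| \le 2$ stays within a fixed multiple of its initial norm up to time $S$ and has norm at most $\rho$ times its initial norm at time $S$; by the uniform convergence above and the robustness (upper semicontinuity of the solution set) of \S\ref{sec:euc}, the same holds with slightly worse but still sub-unit constants for the rescaled $f$ once $\gamma$ is small. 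Undoing $x = \gamma y$ and the time reparametrisation --- over which the physical elapsed time is at most $S$ while the physical time spent in $\Omega_R$ is of order $\gamma$, hence negligible --- and iterating along the orbit at geometrically decreasing radii, we obtain $\|\phi(t) - \bO\| \le \alpha \re^{-\beta t} \|\phi(0) - \bO\|$ for every Filippov solution of $f$ with $\|\phi(0)\|$ small enough, i.e.\ exponential stability of $\bO$ for $f$. Running the comparison in the opposite direction gives the converse: a hypothetical family of Filippov solutions of $F$ witnessing a failure of exponential stability can, after rescaling, be shadowed by Filippov solutions of $f$ (again using $c_1 < 0$ to rule out the non-uniqueness that would obstruct this), contradicting exponential stability of $f$.

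The main obstacle, as the introduction anticipates, is the discontinuity of the time scaling. Because the reparametrisation rate jumps --- between a value of order $1$ on $\Omega_L \cup \Sigma$ and a value of order $\frac{1}{\gamma}$ on $\Omega_R$ --- each time the solution crosses $\Sigma$, I must verify that the reparametrised curve is a genuine absolutely continuous function of the new time and is itself a Filippov solution of the rescaled system, i.e.\ that the change of variables between the two time axes is well behaved. One half of this is easy: the rate is bounded below by a positive constant, so $t \mapsto s$ is strictly increasing and its inverse is Lipschitz, hence absolutely continuous. The real work, carried out with the real-analysis tools of \S\ref{sec:timeScaling}, is to control the structure of the set of crossing times --- ruling out pathological accumulation of switchings and ensuring that the sets of times spent in $\Omega_L$, in $\Omega_R$, and on $\Sigma$ are measurable and behave as expected --- so that the reparametrisation commutes both with the Filippov construction \eqref{eq:cFgeneral} and with the limit $\gamma \to 0$. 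Once this is in place, the remaining ingredients --- the dyadic iteration, the bookkeeping of constants, and the transfer of the exponential bound across the reparametrisation --- are routine.
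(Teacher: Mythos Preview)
Your proposal is correct and takes essentially the same approach as the paper: blow up via $x = \gamma y$, apply the discontinuous time rescaling of \S\ref{sec:timeScaling} so that the rescaled system converges to $F$ uniformly over scales, invoke the robustness theorem of \S\ref{sec:euc}, and iterate over shells. The paper packages both implications (and Theorem~\ref{th:robust} besides) into a single symmetric statement --- exponential stability transfers from any system of the form \eqref{eq:f} to any sufficiently close one --- exactly as your plan anticipates; the one point worth sharpening is that the rescaling factor on the sliding region is not freely chosen but is forced (it is the $a_0$ of Lemma~\ref{le:discTimeScaling}), and its $\gamma$-independent positive lower bound (part~(iii) of that lemma) is precisely what keeps physical and reparametrised times uniformly comparable.
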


If $c_1 > 0$ then $x = \bO$ is unstable (i.e.~not Lyapunov stable)
for both \eqref{eq:f} and \eqref{eq:F}.
This is because there exists a Filippov solution in $\Omega_R$ that emanates from $\bO$ with direction $c$.
If $c_1 = 0$ then $x = \bO$ is not asymptotically stable for \eqref{eq:F}, but can be exponentially stable for \eqref{eq:f}.
For example this occurs for the system
\begin{align}
f^L(x) &= \begin{bmatrix} -\nu x_1 + x_2 \\ -x_1 - \nu x_2 \end{bmatrix}, &
f^R(x) &= \begin{bmatrix} x_2 \\ -1 \end{bmatrix},
\label{eq:c10}
\end{align}
with any $\nu > 0$, see Fig.~\ref{fig:counterExample}.

\begin{figure}[b!]
\begin{center}
\setlength{\unitlength}{1cm}
\begin{picture}(11,5)
\put(0,0){\includegraphics[height=4.5cm]{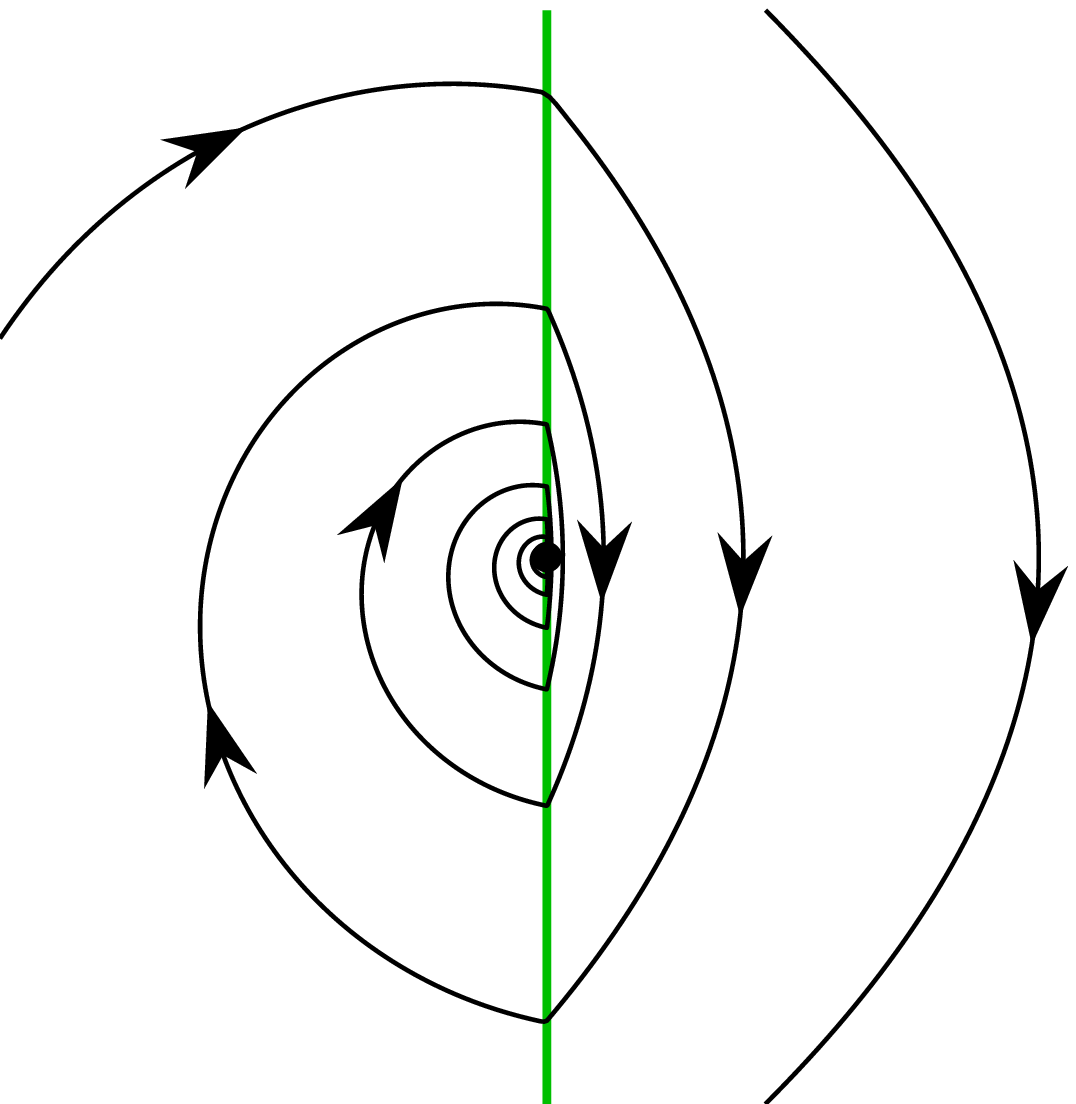}}
\put(6.5,0){\includegraphics[height=4.5cm]{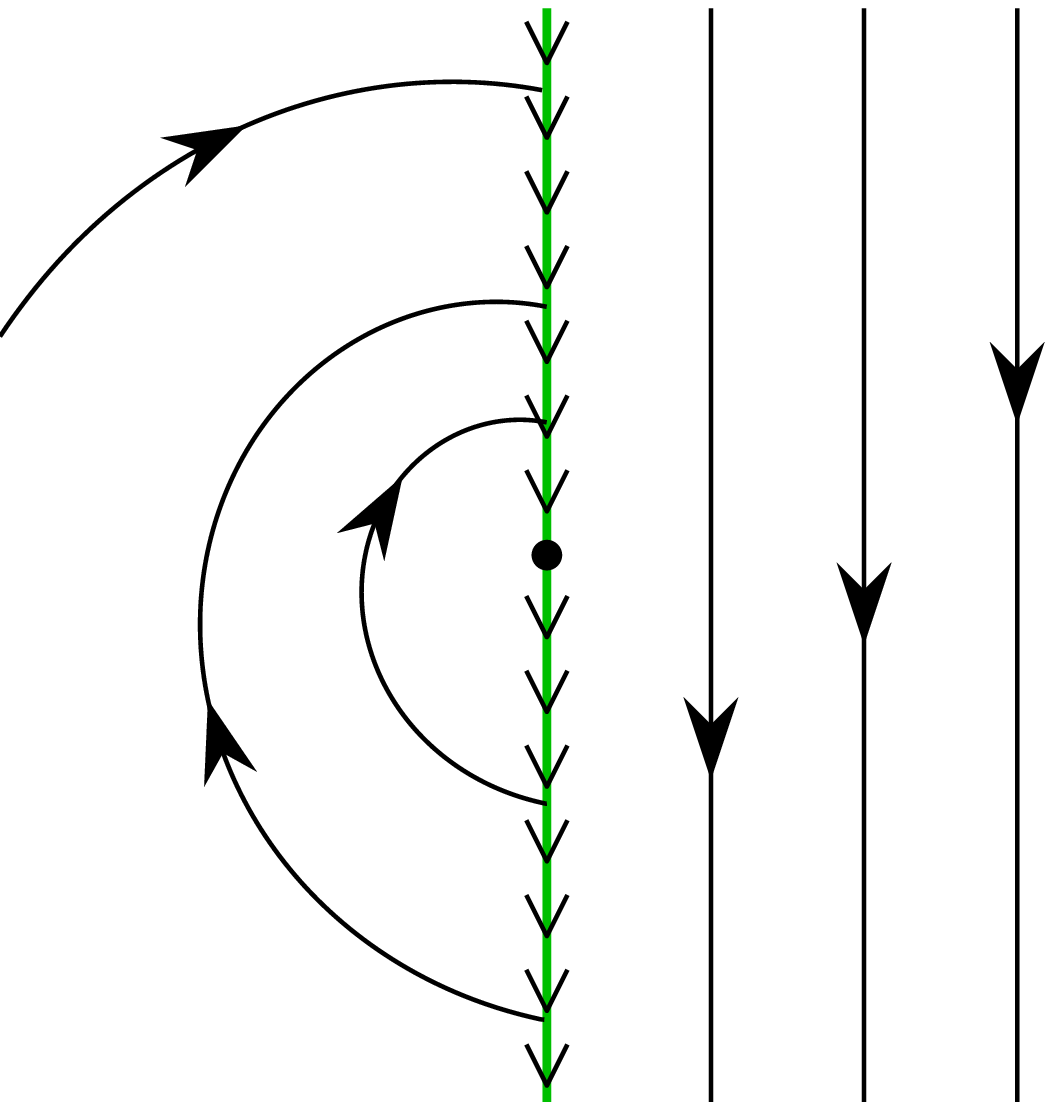}}
\put(1.8,4.7){\small $\dot{x} = f(x)$}
\put(8.3,4.7){\small $\dot{x} = F(x)$}
\end{picture}
\caption{
The left plot is a phase portrait of \eqref{eq:f} with \eqref{eq:c10} and $\nu = 0.2$.
Here the origin is exponentially stable.
The right plot is a phase portrait of the corresponding reduced system \eqref{eq:F}
(given by replacing $x_2$ in $f^R(x)$ with $0$).
Here the origin is unstable.
This example does not contradict Theorem \ref{th:expStabEquiv} because $c_1 = 0$.
\label{fig:counterExample}
} 
\end{center}
\end{figure}

Exponential stability is in general a stronger property than asymptotic stability,
but for \eqref{eq:F} these properties are in fact equivalent:

\begin{theorem}
The equilibrium $x = \bO$ of \eqref{eq:F} is exponentially stable if and only if it is asymptotically stable.
\label{th:asyStabImpliesExpStab}
\end{theorem}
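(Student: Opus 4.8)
One implication is immediate: if there are $\alpha \ge 1$, $\beta > 0$, $\delta > 0$ with $\|\phi(t)\| \le \alpha \re^{-\beta t}\|\phi(0)\|$ for every Filippov solution of \eqref{eq:F} with $\phi(0) \in B_\delta(\bO)$, then letting $t \to \infty$ gives attractivity and, given $\ee > 0$, the radius $\min\{\delta,\ee/\alpha\}$ witnesses Lyapunov stability; hence exponential stability implies asymptotic stability. The substance of the theorem is the converse, and my plan for it is built on the (piecewise) homogeneity of $F$.

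The key structural fact I would use is that the dilation $x \mapsto \gamma x$ (for any $\gamma > 0$) maps Filippov solutions of \eqref{eq:F} to Filippov solutions of \eqref{eq:F} once time is suitably reparametrised: the reparametrisation rate equals $1$ while the solution lies in $\Omega_L$ (since $Ax$ is homogeneous of degree one), equals $\gamma$ while it lies in $\Omega_R$ (since $c$ is homogeneous of degree zero, so $\gamma\phi(0) + cs = \gamma(\phi(0)+ct)$ forces $s = \gamma t$), and equals $\gamma(c_1 - \gamma(Ax)_1)/(c_1 - (Ax)_1)$ on the attracting sliding region of $\Sigma$ (a short computation gives $F_s(\gamma x) = \frac{c_1-(Ax)_1}{c_1-\gamma(Ax)_1} F_s(x)$ for the Filippov sliding field $F_s$). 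This rate is discontinuous across $\Sigma$, and accommodating that discontinuity is precisely what is carried out in \S\ref{sec:lh}--\S\ref{sec:timeScaling}; I would invoke that machinery here rather than redo it. The one feature I need is that for $\gamma \le 1$ the rate is at most $1$, so shrinking an orbit towards $\bO$ never lengthens the time it takes to execute a given rescaled motion.

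Granting this, I would argue as follows. Suppose $\bO$ is asymptotically stable with basin containing $B_{\delta_0}(\bO)$. Lyapunov stability applied with $\ee = \delta_0$ yields $\delta_1 > 0$ such that every Filippov solution from $\overline{B}_{\delta_1}(\bO)$ stays in $B_{\delta_0}(\bO)$; combined with dilation invariance (the quantity $\sup_{t\ge0}\|\phi(t)\|$ scales linearly under $x\mapsto\gamma x$), this gives a linear growth bound $\sup_{t\ge0}\|\phi(t)\| \le \alpha_0\|\phi(0)\|$ for all Filippov solutions with $\|\phi(0)\| \le \delta_1$, where $\alpha_0 \ge 1$ is a fixed constant. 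Next I would establish a uniform contraction time: there is $T < \infty$ such that every Filippov solution $\phi$ with $\|\phi(0)\| \le \delta_1$ satisfies $\|\phi(t)\| \le \|\phi(0)\|/(2\alpha_0)$ for some $t \le T$. By dilation invariance it suffices to prove this for solutions starting on the sphere $\|x\| = \delta_1$ (smaller initial norms only speed things up, since then $\gamma \le 1$). If it failed on that sphere, there would be solutions $\phi_k$ from the sphere whose first entry time into $B_{\delta_1/(2\alpha_0)}(\bO)$ tends to infinity; these all remain in $\overline{B}_{\delta_0}(\bO)$, so by the compactness/robustness results for Filippov solutions recalled in \S\ref{sec:euc} a subsequence converges uniformly on compact intervals to a Filippov solution from the sphere that never enters $B_{\delta_1/(2\alpha_0)}(\bO)$ --- contradicting asymptotic stability.

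Finally I would close the iteration. Fix a Filippov solution $\phi$ with $\|\phi(0)\| \le \delta_1$; at the time $t_\phi \le T$ above, $\|\phi(t_\phi)\| \le \|\phi(0)\|/(2\alpha_0)$, and since $t\mapsto\phi(t+t_\phi)$ is again a Filippov solution starting in $\overline{B}_{\delta_1}(\bO)$ the linear growth bound gives $\|\phi(t)\| \le \tfrac12\|\phi(0)\|$ for all $t \ge t_\phi$, in particular for all $t \ge T$. Iterating over the intervals $[kT,(k+1)T]$ yields $\|\phi(kT)\| \le 2^{-k}\|\phi(0)\|$, and applying the linear growth bound on each such interval gives $\|\phi(t)\| \le \alpha_0 2^{-k}\|\phi(0)\|$ for $t\in[kT,(k+1)T]$; hence $\|\phi(t)\| \le 2\alpha_0 \re^{-(\ln 2)t/T}\|\phi(0)\|$ for all $t\ge 0$, which is exponential stability with $\alpha = 2\alpha_0$, $\beta = (\ln 2)/T$, $\delta = \delta_1$. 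The main obstacle is the dilation step: because the two pieces of $F$ carry different degrees of homogeneity, the time reparametrisation accompanying $x\mapsto\gamma x$ is discontinuous at $\Sigma$, so the classical homogeneous theory does not apply directly and the analytic results of \S\ref{sec:lh}--\S\ref{sec:timeScaling} must be brought in. A lesser nuisance is that $\|\phi(t)\|$ need not decrease monotonically along solutions; inserting the factor $1/(2\alpha_0)$ in the uniform-time claim is the device that lets the linear growth bound absorb any rebound and still close the induction.
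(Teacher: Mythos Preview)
Your proposal is correct and follows essentially the same route as the paper: use the dilation-plus-time-rescaling machinery of \S\ref{sec:lh}--\S\ref{sec:timeScaling} to transfer bounds from a fixed sphere to all smaller ones, extract a uniform contraction time from asymptotic stability (the paper does this via Lemma~\ref{le:uniformConvergence}; your compactness argument on the sphere is the same idea and in fact reproves that lemma), and then iterate to obtain $\beta = (\ln 2)/T$. One small slip that does not affect the argument since you defer to Lemma~\ref{le:discTimeScaling} anyway: your sliding formula should read $F_s(\gamma x) = \gamma\,\frac{c_1-(Ax)_1}{c_1-\gamma(Ax)_1}\,F_s(x)$, so the reparametrisation rate on the sliding region is $\frac{c_1-\gamma(Ax)_1}{c_1-(Ax)_1}$ (without the extra $\gamma$), which is indeed $\le 1$ on $\Sigma_{\rm slide}$ when $\gamma\le 1$.
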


Theorems \ref{th:expStabEquiv} and \ref{th:asyStabImpliesExpStab}
reduce the problem of the exponential stability of a boundary equilibrium of \eqref{eq:f}
to the asymptotic stability of $x = \bO$ for the simpler system \eqref{eq:F}.

Finally we show that exponential stability is robust to the entries in $A$ and $c$.
Consider a family of systems of the form \eqref{eq:F}:
\begin{equation}
\dot{x} = F_\mu(x) = \begin{cases}
A(\mu) x, & x_1 < 0, \\
c(\mu), & x_1 > 0,
\end{cases}
\label{eq:Ffamily}
\end{equation}
where $\mu$ is a parameter that takes values in a set $J$.

\begin{theorem}
Let $\tilde{\mu} \in J$ and suppose $\bO$ is exponentially stable for $\dot{x} = F_{\tilde{\mu}}(x)$.
There exists $\delta > 0$ such that if
$\| A(\mu) - A(\tilde{\mu}) \| < \delta$ (using the induced matrix norm)
and $\| c(\mu) - c(\tilde{\mu}) \| < \delta$,
then $\bO$ is also exponentially stable for $\dot{x} = F_\mu(x)$.
\label{th:robust}
\end{theorem}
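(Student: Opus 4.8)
\emph{Overview of the plan.} The plan is to reduce exponential stability of $\bO$ for \eqref{eq:Ffamily} to a finite-time contraction \emph{certificate} --- a statement about Filippov solutions over a bounded time interval starting on a fixed sphere --- and then to deduce that this certificate is preserved under small perturbations from the continuous dependence of Filippov solutions on the right-hand side (recalled in \S\ref{sec:euc} from \cite{Fi88}). Throughout fix $\tilde\mu$, write $\tilde A = A(\tilde\mu)$, $\tilde c = c(\tilde\mu)$, and let $\cF_\mu$ denote the set-valued map \eqref{eq:cFgeneral} associated with $F_\mu$. First observe that exponential stability of $\bO$ for $\dot x = F_{\tilde\mu}(x)$ forces $\tilde c_1 < 0$: if $\tilde c_1 > 0$ then $\bO$ is not Lyapunov stable, if $\tilde c_1 = 0$ it is not asymptotically stable (as noted after Theorem \ref{th:expStabEquiv}, hence not exponentially stable), and $\tilde c = \bO$ would make every point of $\Omega_R$ an equilibrium. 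Consequently the $\delta$ we produce will, among other things, be small enough that $\| c(\mu)-\tilde c\| < \delta$ implies $c_1(\mu) < 0$, and that $\|A(\mu)\|$, $\|c(\mu)\|$ stay bounded.

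\emph{Step 1: the certificate.} I claim that for any system of the form \eqref{eq:F} with $c_1 < 0$, the origin is exponentially stable if and only if there exist $r>0$, $T>0$, $M\ge 1$, and $\lambda\in(0,1)$ such that every Filippov solution $\phi$ with $\|\phi(0)\| = r$ satisfies $\|\phi(t)\| \le Mr$ for all $t\in[0,T]$ and $\|\phi(T)\| \le \lambda r$. The forward implication is immediate from Definition \ref{df:stable}(iii): take $r$ inside the corresponding ball, $M = \alpha$, and $T$ with $\alpha\re^{-\beta T}\le\tfrac12$. For the converse one rescales each ``round''. By the piecewise homogeneity of \eqref{eq:F}, together with the \emph{discontinuous} time reparametrisation constructed in \S\ref{sec:lh}--\S\ref{sec:timeScaling}, every Filippov solution $\phi$ with $0 < \|\phi(0)\| \le r$ equals $\tfrac{\|\phi(0)\|}{r}$ times a Filippov solution starting on the sphere of radius $r$, composed with a strictly increasing time change whose derivative lies in $[1,\,r/\|\phi(0)\|]$. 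Iterating the certificate then shrinks $\|\phi\|$ by a factor $\lambda$ over each such round, while the ``$\Omega_L$ part'' of a round is never sped up; carrying out the same bookkeeping used to prove Theorems \ref{th:expStabEquiv} and \ref{th:asyStabImpliesExpStab} shows $\|\phi(t)\|$ decays at least exponentially in $t$ (the degenerate possibility that $\phi$ reaches $\bO$ in finite time is harmless because $c_1 < 0$ makes $\bO$ positively invariant).

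\emph{Step 2: robustness of the certificate.} Fix $(r,T,M,\lambda)$ supplied by Step 1 for $F_{\tilde\mu}$ and set $M' = M+1$, $\lambda' = \tfrac{\lambda+1}{2}\in(0,1)$. On the compact set $\overline{B}_{M'r}(\bO)$ one has, whenever $\|A(\mu)-\tilde A\| < \eta$ and $\|c(\mu)-\tilde c\| < \eta$, the uniform bound $\|F_\mu - F_{\tilde\mu}\| \le \eta\max(M'r,1)$ off $\Sigma$, and hence $\cF_\mu(x)\subseteq\cF_{\tilde\mu}(x) + \eta\max(M'r,1)\,\overline{B}_1(\bO)$ for all $x\in\overline{B}_{M'r}(\bO)$. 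Suppose the $(r,T,M',\lambda')$-certificate failed for $F_\mu$ with $\mu$ arbitrarily close to $\tilde\mu$: there would exist $\mu_i\to\tilde\mu$ and Filippov solutions $\phi_i$ of $\dot x = F_{\mu_i}(x)$ with $\|\phi_i(0)\| = r$ such that for each $i$ either $\|\phi_i(t_i)\| > M'r$ for some $t_i\in[0,T]$ or $\|\phi_i(T)\| > \lambda' r$. Since $\{\|x\| = r\}$ is compact we may assume $\phi_i(0)\to x_0$; restricting each $\phi_i$ to the interval up to its first exit from $\overline{B}_{M'r}(\bO)$ (or to all of $[0,T]$), the continuous-dependence theorem of \S\ref{sec:euc} yields a subsequence converging uniformly to a Filippov solution $\phi$ of $\dot x = F_{\tilde\mu}(x)$ with $\|\phi(0)\| = r$, and passing to the limit in the violated inequality gives $\sup_{[0,T]}\|\phi\| \ge M'r > Mr$ or $\|\phi(T)\| \ge \lambda'r > \lambda r$, contradicting the certificate for $F_{\tilde\mu}$. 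So there is $\delta>0$ --- also small enough for the preliminary requirements of paragraph one --- such that $\|A(\mu)-\tilde A\| < \delta$ and $\|c(\mu)-\tilde c\| < \delta$ imply $F_\mu$ satisfies the $(r,T,M',\lambda')$-certificate; by the converse direction of Step 1 applied to $F_\mu$ (which has $c_1(\mu) < 0$), $\bO$ is exponentially stable for $\dot x = F_\mu(x)$.

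\emph{The main obstacle.} The substantive point is the converse in Step 1: passing from a finite-time contraction to genuine exponential decay in $t$ requires the piecewise-homogeneous rescaling, whose time component jumps across $\Sigma$; this is exactly the technical apparatus developed in \S\ref{sec:lh}--\S\ref{sec:timeScaling} and exploited in \S\ref{sec:mainProof}, so it is inherited rather than reproved here. The remaining care is routine bookkeeping --- keeping the perturbation $F_\mu - F_{\tilde\mu}$ uniformly small on a \emph{fixed} compact set containing the relevant solutions, and making sure the quantifier ``for every Filippov solution'' survives the limit, which the subsequence/contradiction argument handles.
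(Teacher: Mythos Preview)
Your approach is correct and rests on the same two ingredients the paper uses --- the discontinuous time-scaling lemma (Lemma~\ref{le:discTimeScaling}) to exploit piecewise homogeneity, and continuous dependence (Theorem~\ref{th:FilippovContinuity}) to absorb the perturbation --- but you organise them differently. The paper proves Theorems~\ref{th:expStabEquiv} and~\ref{th:robust} in a single direct estimate: blow up by $z=x/\gamma$, rescale the right half-system to obtain $\check f,\check g$, apply Lemma~\ref{le:discTimeScaling} (parts (ii),(iii)) to bound $\|\check\varphi_s(z)\|$, then Theorem~\ref{th:FilippovContinuity} to transfer this to $\|\check\psi_s(z)\|$, then Lemma~\ref{le:discTimeScaling} (parts (i),(iii)) again to recover a bound on $\|\psi_t(x)\|$, tracking explicit constants throughout. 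You instead isolate a finite-time contraction certificate on a fixed sphere, show it is equivalent to exponential stability via the rescaling (this is essentially the paper's proof of Theorem~\ref{th:asyStabImpliesExpStab}, and your iteration only needs $p(t)\le t$, not part~(iii)), and then argue the certificate is robust by contradiction/compactness. Your packaging is more modular and arguably cleaner for Theorem~\ref{th:robust} alone; the paper's route has the advantage of yielding Theorem~\ref{th:expStabEquiv} simultaneously and giving quantitative constants.

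One point in Step~2 deserves tightening. Theorem~\ref{th:FilippovContinuity} as stated produces, for each solution of the \emph{reference} system $F_{\tilde\mu}$, a nearby solution of the \emph{perturbed} system $F_\mu$ --- the opposite direction from what a raw subsequence argument needs. The fix is short: since you have arranged $c_1(\mu_i)<0$, Theorem~\ref{th:FilippovUniqueness} gives forward uniqueness for $F_{\mu_i}$, so the nearby $F_{\mu_i}$-solution manufactured by Theorem~\ref{th:FilippovContinuity} from the unique $F_{\tilde\mu}$-solution with initial point $\phi_i(0)$ must coincide with your $\phi_i$; hence $\phi_i$ inherits the $(r,T,M',\lambda')$-certificate directly, and no subsequence extraction is required. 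Alternatively, the full upper-semicontinuity statement in \cite[pg.~89--90]{Fi88} covers the direction you invoke.
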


\section{Existence, uniqueness, and continuity with respect to parameters}
\label{sec:euc}
\setcounter{equation}{0}

In this section we state three fundamental theorems regarding Filippov solutions for systems of the form \eqref{eq:f}.
These theorems are used below in \S\ref{sec:mainProof} to prove Theorems \ref{th:expStabEquiv}--\ref{th:robust}.
Proofs of the theorems of this section can be found in \cite{Fi88} (note that in \cite{Fi88} they are stated with more generality).

The first theorem ensures Filippov solutions exist, see \cite[pg.~85]{Fi88}.

\begin{theorem}
Consider \eqref{eq:f} satisfying \eqref{eq:Ck} with $k = 0$.
For any $x \in \Omega$, there exists $T > 0$ such that \eqref{eq:f}
has a Filippov solution $\phi(t)$ on $[-T,T]$ with $\phi(0) = x$.
\label{th:FilippovExistence}
\end{theorem}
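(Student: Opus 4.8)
The plan is to recognise Theorem \ref{th:FilippovExistence} as an instance of the classical local existence theory for differential inclusions $\dot{x} \in \cF(x)$, and to check that, under \eqref{eq:Ck} with $k = 0$, the set-valued map $\cF$ of \eqref{eq:cFgeneral} has the three properties that this theory requires near the initial point: nonempty compact convex values, a local bound, and upper semicontinuity. First I would record the structure of $\cF$. If $y \in \Omega_L$ or $y \in \Omega_R$ then $f$ agrees with a continuous function ($f^L$ or $f^R$) on a neighbourhood of $y$, so $\cF(y) = \{ f(y) \}$. If $y \in \Sigma$ then the only limiting values of $f$ at $y$ are $f^L(y)$ and $f^R(y)$, using continuity of $f^L$ on $\Omega_L \cup \Sigma$ and of $f^R$ on $\Omega_R \cup \Sigma$, so $\cF(y) = {\rm co}\{ f^L(y), f^R(y) \}$, the segment joining these two vectors. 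In every case $\cF(y)$ is nonempty, compact and convex. Fixing $x \in \Omega$ and choosing $r > 0$ with $\overline{B}_r(x) \subset \Omega$, the set $\overline{B}_r(x) \cap \{ y : y_1 \le 0 \}$ is a compact subset of $\Omega_L \cup \Sigma$ on which $f^L$ is continuous, and similarly for $f^R$; hence there is $M > 0$ with $\| v \| \le M$ for all $v \in \cF(y)$ and all $y \in \overline{B}_r(x)$.

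Next I would verify that $\cF$ is upper semicontinuous, that is, for each $y \in \Omega$ and each $\ee > 0$ there is $\delta > 0$ with $\cF(z) \subseteq \cF(y) + B_\ee(\bO)$ for all $z \in B_\delta(y)$. When $y \notin \Sigma$ this is immediate from continuity of the active component of $f$. When $y \in \Sigma$ it follows from \eqref{eq:cFgeneral} together with the uniform continuity of $f^L$ and $f^R$ on a small ball around $y$: any element of $\cF(z)$ for $z$ near $y$ is a convex combination of values of $f$ at points near $y$, each of which is within $\ee$ of $f^L(y)$ or of $f^R(y)$, so the combination lies within $\ee$ of the segment $\cF(y)$.

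With these properties in hand the existence of a local Filippov solution is obtained by the standard approximation argument. On the interval $[-T,T]$ with $T = r/M$, construct approximate solutions $\phi_k$ with $\phi_k(0) = x$ --- for instance Euler polygonal arcs whose slope on each subinterval of a mesh of width $1/k$ is a selection of $\cF$ at the left endpoint. Each $\phi_k$ stays in $\overline{B}_r(x)$ and is $M$-Lipschitz, so the Arzel\`a--Ascoli theorem yields a subsequence converging uniformly to an absolutely continuous $\phi : [-T,T] \to \Omega$ with $\phi(0) = x$. The remaining, and I expect main, obstacle is to show that $\dot{\phi}(t) \in \cF(\phi(t))$ for almost every $t$: the derivatives $\dot{\phi}_k$ are bounded in $L^\infty([-T,T])$, so after passing to a further subsequence they converge weakly in $L^1$ to $\dot\phi$; Mazur's lemma then produces convex combinations of the $\dot\phi_k$ converging strongly, hence pointwise a.e.\ along a subsequence, and at each such $t$ the upper semicontinuity of $\cF$ together with the convexity and closedness of its values forces the limit into $\cF(\phi(t))$. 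This is precisely the step where the convexification built into \eqref{eq:cFgeneral} is essential. Since the construction is symmetric under $t \mapsto -t$, forward and backward existence are obtained together, giving a Filippov solution on $[-T,T]$ as claimed. A packaged version of this argument appears in \cite{Fi88}.
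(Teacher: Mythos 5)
Your argument is correct and is essentially the standard existence proof for differential inclusions with nonempty compact convex values, local boundedness, and upper semicontinuity, which is exactly what the paper relies on: it gives no proof of its own but cites \cite[pg.~85]{Fi88}, the same packaged result you invoke at the end. All the properties you verify for $\cF$ (in particular that $\cF(y)$ is the segment ${\rm co}\{f^L(y),f^R(y)\}$ on $\Sigma$) do hold under \eqref{eq:Ck} with $k=0$, and the Euler--polygon/Mazur limit argument is the right way to close the a.e.\ inclusion.
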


The second theorem gives conditions under which Filippov solutions are unique forwards in time, see \cite[pg.~110]{Fi88}.
It shows that forward uniqueness can only be violated if a solution reaches a point on the discontinuity surface $\Sigma$
where neither $f^L$ nor $f^R$ is directed towards $\Sigma$.

\begin{theorem}
Consider \eqref{eq:f} satisfying \eqref{eq:Ck} with $k = 1$.
Suppose that for all $x \in \Sigma$ we have either $f^L_1(x) > 0$ or $f^R_1(x) < 0$.
Let $T > 0$ and let $\phi(t)$ and $\psi(t)$ be Filippov solutions to \eqref{eq:f}
that belong to $\Omega$ for all $t \in [0,T]$.
If $\phi(0) = \psi(0)$ then $\phi(t) = \psi(t)$ for all $t \in [0,T]$.
\label{th:FilippovUniqueness}
\end{theorem}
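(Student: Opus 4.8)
\textbf{Proof proposal for Theorem \ref{th:FilippovUniqueness}.}

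The plan is to treat the two pieces $f^L$ and $f^R$ as Lipschitz vector fields on their respective closed half-spaces (using \eqref{eq:Ck} with $k=1$, which gives local Lipschitz bounds near any point of $\Sigma$), and to partition the interval $[0,T]$ according to where the two solutions $\phi,\psi$ sit relative to $\Sigma$. Away from $\Sigma$ — i.e.\ on any open subinterval where $\phi(t)$ stays in $\Omega_L$ (or $\Omega_R$) — the Filippov differential inclusion reduces to the single-valued ODE $\dot x = f^L(x)$ (resp.\ $\dot x = f^R(x)$), so classical Picard--Lindel\"of uniqueness applies on that subinterval. The content of the theorem is therefore entirely about what happens at times where the solutions touch $\Sigma$, and the hypothesis ``$f^L_1(x)>0$ or $f^R_1(x)<0$ for all $x\in\Sigma$'' is precisely what rules out the ``both fields point away from $\Sigma$'' repelling-sliding scenario in which forward branching occurs.

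First I would establish the key local lemma: for each $\bar x\in\Sigma$ there is a neighbourhood $N$ of $\bar x$ and a constant $L$ such that any two Filippov solutions starting at the same point in $N$ and remaining in $N$ satisfy $\|\phi(t)-\psi(t)\|\le$ (something controllable), forcing $\phi\equiv\psi$. To do this I split into the two cases allowed by the hypothesis. \textbf{Case $f^R_1(\bar x)<0$:} then by continuity $f^R_1<0$ on a neighbourhood, and also $f^L_1$ has a definite sign or is itself crossing/sliding; the upshot is that near $\bar x$ the dynamics is either ``crossing'' (both components point the same way through $\Sigma$) or ``attracting sliding'' ($f^L_1>0$, $f^R_1<0$), and in the attracting-sliding subcase the sliding vector field $f^s$ on $\Sigma$ is a well-defined Lipschitz vector field (the standard Filippov convex combination $f^s = \lambda f^L+(1-\lambda)f^R$ with $\lambda$ the smooth function making the first component vanish — here I would invoke the connection between sliding motion and Filippov solutions proved in \S\ref{sec:sliding}). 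So on $\Sigma$ the solution is governed by a Lipschitz ODE and is unique there; off $\Sigma$ it is governed by $f^L$ or $f^R$ and is unique there; and the only way to leave the sliding region is through its boundary, transversally, which glues these pieces uniquely. \textbf{Case $f^L_1(\bar x)>0$:} symmetric, with the roles adjusted — now $f^L$ pushes solutions out of $\Omega_L$ into $\Sigma$ or across, and the same crossing-versus-attracting-sliding dichotomy holds.

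Then I would globalise by a standard connectedness/compactness argument: let $t^* = \sup\{t\in[0,T] : \phi(s)=\psi(s)\text{ for all }s\in[0,t]\}$; by continuity $\phi(t^*)=\psi(t^*)=:\bar x$, and if $t^*<T$ then applying the local lemma in a neighbourhood of $\bar x$ (shrinking to a time $t^*+\eta$ on which both solutions remain in $N$, possible by continuity) forces $\phi=\psi$ past $t^*$, contradicting maximality; hence $t^*=T$. The main obstacle I anticipate is the bookkeeping at times when the solution grazes $\Sigma$ tangentially or enters/exits the sliding region — one must verify that the set of times spent on $\Sigma$ versus off it is well-structured enough (e.g.\ that the solution cannot chatter in a way that defeats the Lipschitz estimate) and that the Lipschitz constant for the sliding field $f^s$ is genuinely available from the $C^1$ hypothesis; the hypothesis $f^L_1>0$ or $f^R_1<0$ on all of $\Sigma$ is exactly the lever that prevents the pathological forward-branching, so the proof is really about carefully converting that transversality-of-at-least-one-field condition into a single-valued, Lipschitz selection from $\cF$ along the solution.
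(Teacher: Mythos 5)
First, a point of comparison: the paper does not prove Theorem \ref{th:FilippovUniqueness} at all --- it is quoted from \cite[pg.~110]{Fi88} --- so your sketch is effectively being measured against Filippov's proof. Your reading of the hypothesis is correct: the condition ``$f^L_1(x)>0$ or $f^R_1(x)<0$'' excludes exactly the repelling configuration $f^L_1\le 0\le f^R_1$, which is the only source of forward branching, and the easy cases (both solutions off $\Sigma$, crossing, strict attracting sliding where $f^S$ is $C^1$ because $f^L_1-f^R_1$ is bounded away from zero) go through as you describe.

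As written, though, the argument has two genuine gaps, and they sit exactly where the content of the theorem lies. (a) The local dichotomy ``crossing or attracting sliding'' fails at tangency points, e.g.\ $f^L_1(\bar x)=0$ with $f^R_1(\bar x)<0$. Under only the $C^1$ hypothesis \eqref{eq:Ck} the tangency set $\left\{ x\in\Sigma \,:\, f^L_1(x)=0 \right\}$ is an arbitrary closed subset of $\Sigma$, not a finite collection of quadratic folds, and your claim that a solution leaves the sliding region ``through its boundary, transversally'' is not correct: it exits precisely where $f^L_1=0$, where $f^S=f^L$ is tangent to $\Sigma$ and to the exit set, and one must still prove that the sliding continuation and the $f^L$-continuation into $\Omega_L$ cannot both be distinct Filippov solutions, nor a solution re-attach to $\Sigma$ on a complicated set of times. (b) Relatedly, your globalization tacitly assumes that $\left\{ t \,:\, \phi_1(t)=0 \right\}$ decomposes $[0,T]$ into a well-ordered family of intervals on each of which one of the three Lipschitz ODEs applies; this set is merely closed and can a priori be Cantor-like, so ``gluing Picard--Lindel\"of pieces'' is not available without further argument. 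You flag both issues yourself, but flagging is not resolving: the ``key local lemma'' is never given an actual estimate, so the $t^*=\sup\{\cdots\}$ step has nothing to run on when $t^*$ is an accumulation point of tangency times. Filippov's proof sidesteps the time decomposition by establishing differential inequalities for $\|\phi(t)-\psi(t)\|$ valid for almost every $t$ regardless of which side of $\Sigma$ the solutions occupy; some device of that kind, or a genuine analysis of the tangency set, is needed to close your argument.
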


The final theorem shows that Filippov solutions vary continuously with respect to parameters, see \cite[pgs.~89--90]{Fi88}.
Let $\mu$ be a parameter that takes values in a set $J$, and consider a family of systems
\begin{equation}
\dot{x} = f_\mu(x) = \begin{cases}
f^L_\mu(x), & x_1 < 0, \\
f^R_\mu(x), & x_1 > 0.
\end{cases}
\label{eq:fmu}
\end{equation}

\begin{theorem}
Suppose $f_\mu$ satisfies \eqref{eq:Ck} with $k = 0$ for all $\mu \in J$.
Let $U \subset \Omega$ be compact, let $[a,b]$ be an interval containing $0$, and let $\tilde{\mu} \in J$.
Suppose all Filippov solutions to $\dot{x} = f_{\tilde{\mu}}(x)$ with initial conditions in $U$ belong to $\Omega$ for all $t \in [a,b]$.
Then for all $\ee > 0$ there exists $\delta > 0$ such that if $\| f_\mu(x) - f_{\tilde{\mu}}(x) \| < \delta$ for all $x \in \Omega \setminus \Sigma$, then
for any Filippov solution $\phi(t)$ to $\dot{x} = f_{\tilde{\mu}}(x)$ with $\phi(0) \in U$,
there exists a Filippov solution $\xi(t)$ to $\dot{x} = f_\mu(x)$ with $\xi(0) = \phi(0)$
and $\left\| \xi(t) - \phi(t) \right\| < \ee$ for all $t \in [a,b]$.
\label{th:FilippovContinuity}
\end{theorem}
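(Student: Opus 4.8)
The plan is to recast the family \eqref{eq:fmu} as the family of differential inclusions $\dot{x}\in\cF_\mu(x)$, with $\cF_\mu$ defined by \eqref{eq:cFgeneral}, and to combine closeness of these set-valued maps with a compactness argument. For systems of the form \eqref{eq:f} with $k=0$, each $\cF_\mu(x)$ is nonempty, compact and convex, $\cF_\mu$ is bounded on bounded subsets of $\Omega$, and $x\mapsto\cF_\mu(x)$ is upper semi-continuous (standard consequences of the hyperplane structure and continuity of $f^L_\mu,f^R_\mu$). The basic estimate is that if $\|f_\mu(x)-f_{\tilde{\mu}}(x)\|<\delta$ for all $x\in\Omega\setminus\Sigma$, then
\[
\cF_\mu(x)\subseteq\cF_{\tilde{\mu}}(x)+\overline{B}_\delta(\bO)\qquad\text{for every }x\in\Omega,
\]
which follows at once from \eqref{eq:cFgeneral} because forming closed convex hulls and taking nested intersections of compact sets both respect enlargement by $\overline{B}_\delta(\bO)$. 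In particular, for any $\tilde{\mu}$-solution $\phi$ we have $\dot{\phi}(t)\in\cF_{\tilde{\mu}}(\phi(t))\subseteq\cF_\mu(\phi(t))+\overline{B}_\delta(\bO)$ for almost every $t$, so $\phi$ is an ``approximate'' $\mu$-solution; the goal is to produce a genuine $\mu$-solution that stays $\ee$-close to it.

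Next I would set up an a priori confinement. By a standard compactness argument for differential inclusions (see \cite{Fi88}), the set of $\tilde{\mu}$-solutions with initial condition in $U$ that remain in $\Omega$ on all of $[a,b]$ is compact in $C([a,b],\mathbb{R}^n)$; hence the union $K$ of their images is a compact subset of $\Omega$, and we may fix $\rho>0$ so that the closed $\rho$-neighbourhood $\Omega'$ of $K$ is contained in $\Omega$. On $\Omega'$ the field $f_{\tilde{\mu}}$ is bounded, say by $M_0$, so for $\delta<1$ we have $\|f_\mu\|\le M_0+1=:M$ on $\Omega'\setminus\Sigma$; consequently every Filippov solution of $\dot{x}=f_\mu(x)$ is $M$-Lipschitz for as long as it stays in $\Omega'$.

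The core step is to argue on $[0,b]$ by contradiction, the interval $[a,0]$ being handled in the same way after reversing time and the two half-solutions concatenated at $t=0$. Suppose there are $\ee_0>0$, parameters $\mu_j$ with $\sup_{\Omega\setminus\Sigma}\|f_{\mu_j}-f_{\tilde{\mu}}\|\to0$, and $\tilde{\mu}$-solutions $\phi_j$ with $\phi_j(0)\in U$, such that no $\mu_j$-solution through $\phi_j(0)$ stays within $\ee_0$ of $\phi_j$ on $[0,b]$. Since $U$ is compact we may assume $\phi_j(0)\to p\in U$, and, using the compactness of the $\tilde{\mu}$-solution set, that $\phi_j\to\phi$ uniformly with $\phi$ a $\tilde{\mu}$-solution through $p$. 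By Theorem \ref{th:FilippovExistence} there is for each $j$ a $\mu_j$-solution $\xi_j$ through $\phi_j(0)$; extend it maximally within $\Omega'$. For $j$ large, $\xi_j$ exists on all of $[0,b]$ inside $\Omega'$: otherwise the uniform Lipschitz bound and the Arzel\`a--Ascoli theorem give a subsequence of the $\xi_j$ (frozen past their exit times) converging uniformly to some $\xi_*$, and the convergence theorem for differential inclusions --- applied to $\dot{\xi}_j(t)\in\cF_{\tilde{\mu}}(\xi_j(t))+\overline{B}_{\delta_j}(\bO)$ with $\delta_j\to0$ and $\cF_{\tilde{\mu}}$ upper semi-continuous --- shows $\xi_*$ is a $\tilde{\mu}$-solution through $p$, which by hypothesis stays in the interior of $\Omega'$, contradicting that $\xi_j$ reached $\partial\Omega'$. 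Hence each $\xi_j$ is a genuine $\mu_j$-solution on $[0,b]$, and a further Arzel\`a--Ascoli extraction together with the convergence theorem yields a uniform limit $\xi_*$, a $\tilde{\mu}$-solution through $p$ with $\|\xi_*-\phi\|_{[0,b]}\ge\ee_0>0$.

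The main obstacle is closing the contradiction, which requires $\xi_*=\phi$; this is exactly where the hypotheses must be spent. Whenever $\tilde{\mu}$-solutions through $p$ are unique, the identity $\xi_*=\phi$ is immediate, and this covers every use of the present theorem below, since there $f^R\equiv c$ with $c_1<0$, so the transversality condition of Theorem \ref{th:FilippovUniqueness} holds on all of $\Sigma$. Without uniqueness the compactness argument above only delivers upper semi-continuity of the $\tilde{\mu}$-solution set, and the stated (lower semi-continuity) conclusion has to be obtained by a direct shadowing of $\phi$: one partitions $[0,b]$ into short intervals, solves the $\mu$-inclusion on each starting from the endpoint reached on the previous one, and controls the cumulative drift from $\phi$ via the estimate of the first paragraph and a Gr\"onwall-type bound, treating separately the subarcs along which $\phi$ crosses $\Sigma$ transversally and those along which it slides. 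Carrying this localisation out in the general setting is the technical heart of the argument and is done in \cite{Fi88}.
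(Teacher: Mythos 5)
The paper does not actually prove Theorem \ref{th:FilippovContinuity}: it is imported from \cite[pgs.~89--90]{Fi88}, so there is no internal proof to compare against. Your reconstruction is sound in its first half and follows the standard route: the inclusion $\cF_\mu(x)\subseteq\cF_{\tilde{\mu}}(x)+\overline{B}_\delta(\bO)$ is correct (closed convex hulls and nested intersections of compacta do commute with adding the compact ball $\overline{B}_\delta(\bO)$), and the Arzel\`a--Ascoli/closure argument correctly shows that uniform limits of $\mu_j$-solutions with $\delta_j\to 0$ are $\tilde{\mu}$-solutions, i.e.\ \emph{upper} semicontinuity of the solution set. You are also right to flag that this alone does not prove the theorem as stated: the conclusion is \emph{lower} semicontinuity (every $\tilde{\mu}$-solution is shadowed by some $\mu$-solution with the same initial point), and your contradiction only closes when the $\tilde{\mu}$-solution through the limit initial point is unique.

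Two caveats. First, in the one place the paper invokes this theorem (Step 4 of the proof of Theorems \ref{th:expStabEquiv} and \ref{th:robust}) the reference system generates a semi-flow via Theorem \ref{th:FilippovUniqueness} and the interval is $[0,(M+1)T]$, so your uniqueness-based argument does cover the paper's actual needs; but your remark that $[a,0]$ is ``handled in the same way after reversing time'' is too quick in that version, since forward uniqueness for $-f$ is a different condition from forward uniqueness for $f$ (attracting and repelling sliding regions swap under time reversal), so backward uniqueness cannot be assumed. Second, the general non-unique case --- the partition-and-shadow argument you sketch in your last paragraph, treating transversal crossing and sliding arcs separately --- is precisely the content of the result the paper cites, so deferring it to \cite{Fi88} leaves your write-up at the same level of completeness as the paper itself; as a standalone proof it is genuinely incomplete at exactly that point, and that is the only substantive gap.
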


\section{Sliding motion}
\label{sec:sliding}
\setcounter{equation}{0}

In this section we introduce the sliding vector field for characterising sliding motion on the discontinuity surface $\Sigma$.
The concepts presented here are quite elementary (see for instance \cite{DiBu08,Fi88,Je18b} for further discussion)
but we also prove a lemma specific to systems of the form \eqref{eq:f}
that formally connects Filippov solutions to the sliding vector field. 

Connected subsets of $\Sigma$ for which $f^L_1(x) f^R_1(x) > 0$ are termed {\em crossing regions}.
In view of Theorem \ref{th:FilippovUniqueness}, forward evolution is unique in the neighbourhood of a crossing region.
When a Filippov solution reaches a crossing region,
it simply crosses $\Sigma$ in a continuous fashion, see Fig.~\ref{fig:expStabSchem_b}.
We denote the union of all crossing regions by
\begin{equation}
\Sigma_{\rm cross} = \left\{ x \in \Sigma \,\big|\, f^L_1(x) f^R_1(x) > 0 \right\}.
\nonumber
\end{equation}

\begin{figure}[b!]
\begin{center}
\includegraphics[height=6cm]{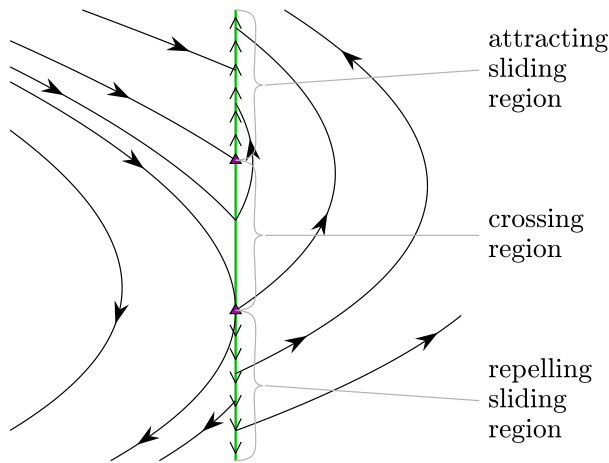}
\caption{
A phase portrait of a two-dimensional Filippov system of the form \eqref{eq:f}.
This system has two tangency points (purple triangles)
that divide the discontinuity surface $\Sigma$
into a crossing region and attracting and repelling sliding regions.
\label{fig:expStabSchem_b}
} 
\end{center}
\end{figure}

Connected subsets of $\Sigma$ for which $f^L_1(x) > 0$ and $f^R_1(x) < 0$
are termed {\em attracting sliding regions}.
Again forward evolution is unique but when a Filippov solution reaches an attracting sliding region
it subsequently evolves on $\Sigma$.
Such sliding motion also occurs on {\em repelling sliding regions}, where $f^L_1(x) < 0$ and $f^R_1(x) > 0$,
but here forward evolution is non-unique.

Sliding motion is specified through the set-valued function \eqref{eq:cFgeneral}.
For our system this function is given by
\begin{equation}
\cF(x) = \begin{cases}
\left\{ f^L(x) \right\}, & x_1 < 0, \\
\left\{ (1-\lambda) f^L(x) + \lambda f^R(x) \,\middle|\, 0 \le \lambda \le 1 \right\}, & x_1 = 0, \\
\left\{ f^R(x) \right\}, & x_1 > 0.
\end{cases}
\label{eq:cF}
\end{equation}
Since a sliding solution is constrained to $\Sigma$, its velocity at any $x \in \Sigma$
belongs to the set $\left\{ (1-\lambda) f^L(x) + \lambda f^R(x) \,\middle|\, 0 \le \lambda \le 1 \right\}$
and is tangent to $\Sigma$.
Thus the velocity, given by
\begin{equation}
f^S(x) = \frac{f^L_1(x) f^R(x) - f^R_1(x) f^L(x)}{f^L_1(x) - f^R_1(x)},
\label{eq:fS}
\end{equation}
exists and is unique at any point $x \in \Sigma_{\rm slide}$ where
\begin{equation}
\Sigma_{\rm slide} = \left\{ x \in \Sigma \,\big|\, f^L_1(x) f^R_1(x) \le 0,
~\text{with $f^L_1(x)$ and $f^R_1(x)$ not both zero} \right\}.
\nonumber
\end{equation}
The condition $f^L_1(x) f^R_1(x) \le 0$ ensures $0 \le \lambda \le 1$.
We exclude $f^L_1(x) = f^R_1(x) = 0$ because in this case $f^S(x)$ is not defined.
Equation \eqref{eq:fS} defines a vector field on sliding regions and is known as the {\em sliding vector field}.

Lastly we prove the following result that justifies \eqref{eq:fS}.
Informally, Lemma \ref{le:dphidt} says that Filippov solutions to \eqref{eq:f}
evolve according to $f^L$ while in $\Omega_L$,
evolve according to $f^R$ while in $\Omega_R$,
and slide on $\Sigma$ according to $f^S$.
Note that Filippov solutions are non-differentiable at $x \in \Sigma_{\rm cross}$
except in the special case $f^L(x) = f^R(x)$.
Also \eqref{eq:dphidt} does not hold at points for which $f^L_1(x) = f^R_1(x) = 0$
(such as {\em two-folds} \cite{Je18b})
because here $f^S(x)$ is not defined.

\begin{lemma}
Let $\phi : [a,b] \to \Omega$ be a Filippov solution to \eqref{eq:f} satisfying \eqref{eq:Ck} with $k = 0$.
Let $t \in (a,b)$
be such that $\phi$ is differentiable at $t$.
If $\phi_1(t) = 0$ suppose $f^L_1(\phi(t)) \ne f^R_1(\phi(t))$.
Then
\begin{equation}
\dot{\phi}(t) = \begin{cases}
f^L(\phi(t)), & \phi_1(t) < 0, \\
f^S(\phi(t)), & \phi_1(t) = 0, \\
f^R(\phi(t)), & \phi_1(t) > 0,
\end{cases}
\label{eq:dphidt}
\end{equation}
with $\phi(t) \in \Sigma_{\rm slide}$ in the case $\phi_1(t) = 0$.
\label{le:dphidt}
\end{lemma}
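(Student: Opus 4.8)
The plan is to analyze the three cases $\phi_1(t)<0$, $\phi_1(t)>0$, $\phi_1(t)=0$ separately, using the explicit form of $\cF$ in \eqref{eq:cF}. In the first two cases the argument is immediate: if $\phi_1(t)<0$, then by continuity of $\phi$ there is a neighbourhood of $t$ on which $\phi$ stays in $\Omega_L$, where $\cF(x)=\{f^L(x)\}$ is single-valued; since $\dot\phi(t)\in\cF(\phi(t))$ for almost every $t$, and we may choose $t$ to be such a point (the hypothesis only says $\phi$ is differentiable at $t$, so I should be slightly careful — but in fact $\dot\phi(t)\in\cF(\phi(t))$ holds at \emph{every} point of differentiability that is also a Lebesgue-density point of the full-measure set, and more simply, since $\cF=\{f^L\}$ is continuous and single-valued near $\phi(t)$, the a.e.\ relation $\dot\phi=f^L(\phi)$ upgrades to hold at every differentiability point by a standard density argument). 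The case $\phi_1(t)>0$ is symmetric.

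The substantive case is $\phi_1(t)=0$. First I would show $\phi(t)\in\Sigma_{\rm slide}$. Since $\phi_1$ is differentiable at $t$ with $\phi_1(t)=0$, and $\phi_1(s)=0$ is impossible to sustain only if... — more precisely, I claim $\dot\phi_1(t)=0$. If $\dot\phi_1(t)>0$ then $\phi_1(s)<0$ for $s$ slightly less than $t$ and $\phi_1(s)>0$ for $s$ slightly greater; on a left neighbourhood $\dot\phi=f^L(\phi)$ a.e.\ so $\dot\phi_1(t^-)=f^L_1(\phi(t))$, giving $f^L_1(\phi(t))>0$, and similarly from the right $f^R_1(\phi(t))>0$; but then near $t$ the solution passes through a crossing region and $\dot\phi_1$ has the one-sided limit $f^L_1(\phi(t))=f^R_1(\phi(t))$ only if these agree — which contradicts the hypothesis $f^L_1\ne f^R_1$ at $\phi(t)$ unless the derivative fails to exist. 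Hence $\dot\phi_1(t)>0$ forces nondifferentiability, a contradiction; similarly $\dot\phi_1(t)<0$. So $\dot\phi_1(t)=0$. Now $\dot\phi(t)\in\cF(\phi(t))$ means $\dot\phi(t)=(1-\lambda)f^L(\phi(t))+\lambda f^R(\phi(t))$ for some $\lambda\in[0,1]$; taking first components, $0=(1-\lambda)f^L_1(\phi(t))+\lambda f^R_1(\phi(t))$, which (using $f^L_1\ne f^R_1$) has the unique solution $\lambda=\frac{f^L_1(\phi(t))}{f^L_1(\phi(t))-f^R_1(\phi(t))}$. Substituting this $\lambda$ back into $(1-\lambda)f^L(\phi(t))+\lambda f^R(\phi(t))$ yields exactly \eqref{eq:fS}, i.e.\ $\dot\phi(t)=f^S(\phi(t))$. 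The constraint $\lambda\in[0,1]$ then forces $f^L_1(\phi(t))f^R_1(\phi(t))\le 0$, and since they are not both zero, $\phi(t)\in\Sigma_{\rm slide}$.

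The one technical subtlety — and the step I expect to be the main obstacle — is the passage from "$\dot\phi(s)\in\cF(\phi(s))$ for a.e.\ $s$" to a statement at the particular point $t$ where $\phi$ happens to be differentiable. In the interior cases $\phi_1(t)\ne 0$ this is handled by continuity of the single-valued selection; at $\phi_1(t)=0$ one must argue via the one-sided difference quotients of $\phi$ and the fact that $\cF(\phi(s))=\{f^L(\phi(s))\}$ or $\{f^R(\phi(s))\}$ for $s$ in the relevant deleted neighbourhood when $\phi$ leaves $\Sigma$, versus $\phi$ staying on $\Sigma$, in which case $\phi_1\equiv 0$ locally so $\dot\phi_1(t)=0$ trivially and one integrates the inclusion over a shrinking interval. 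So I would first dispose of the subcase "$\phi$ remains in $\Sigma$ on a neighbourhood of $t$" (here $\phi_1\equiv 0$, $\dot\phi$ lies in the tangent direction, and the mean-value / averaging argument over $[t,t+h]$ combined with continuity of $f^L,f^R$ pins $\dot\phi(t)$ to the unique tangent vector in $\cF(\phi(t))$, which is $f^S(\phi(t))$), and then handle the subcase "$\phi(s)\notin\Sigma$ for $s$ in a punctured neighbourhood" using one-sided derivatives as sketched above; an intermediate subcase where $t$ is an accumulation point of both $\{\phi_1<0\}$, $\{\phi_1>0\}$ and $\{\phi_1=0\}$ is ruled out because it would again force $\dot\phi_1(t)$ to equal incompatible one-sided values. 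Throughout, the continuity of $f^L$ and $f^R$ up to $\Sigma$ from \eqref{eq:Ck} with $k=0$ is what makes the limiting arguments go through.
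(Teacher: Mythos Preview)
Your overall structure matches the paper's: handle $\phi_1(t)\ne 0$ by continuity, then for $\phi_1(t)=0$ show $\dot\phi_1(t)=0$ via one-sided arguments and deduce $\dot\phi(t)=f^S(\phi(t))$ from $\dot\phi(t)\in\cF(\phi(t))$ by solving for the unique $\lambda\in[0,1]$. The contradiction argument that $\dot\phi_1(t)\ne 0$ would force $f^L(\phi(t))=f^R(\phi(t))$ is the same in both proofs.

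The difference --- and the place where your sketch has a genuine gap --- is exactly the step you flag as the main obstacle: establishing $\dot\phi(t)\in\cF(\phi(t))$ at the specific differentiability point $t$. Your proposed subcase split (stays on $\Sigma$; off $\Sigma$ on a punctured neighbourhood; ``intermediate'') is not exhaustive, and the dismissal of the intermediate subcase is unjustified. Nothing prevents $t$ from being a simultaneous accumulation point of $\{\phi_1>0\}$, $\{\phi_1<0\}$, and $\{\phi_1=0\}$ while $\dot\phi_1(t)=0$ (think of $\phi_1$ oscillating with superlinearly decaying amplitude on a complicated set near $t$); in such a configuration $\phi$ sits in no single region on any one-sided interval, so your one-sided derivative device does not apply, yet you still need the full vector $\dot\phi(t)$ to lie in $\cF(\phi(t))$.

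The paper sidesteps this by proving $\dot\phi(t)\in\cF(\phi(t))$ \emph{first and unconditionally}, via precisely the ``mean-value/averaging'' argument you invoke only in your first subcase: write $\phi(s)-\phi(t)=\int_t^s u$ with $u(\tilde t)\in\cF(\phi(\tilde t))$ a.e., note that by continuity of $f^L,f^R$ one has $\bigcup_{y\in N}\cF(y)\subset K$ for a fixed closed convex $K$ when $N$ is a small neighbourhood of $\phi(t)$, so the integral average lies in $K$; then shrink $N$. This works regardless of how $\phi$ weaves between $\Omega_L$, $\Sigma$, and $\Omega_R$. Once $\dot\phi(t)\in\cF(\phi(t))$ is in hand, the rest of your argument goes through unchanged. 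The fix, then, is simply to promote your averaging argument from a subcase to the opening step, after which no subcase analysis on the local geometry of $\phi$ is needed at all.
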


\begin{proof}
Let $x = \phi(t)$ and $v = \dot{\phi}(t)$.
Suppose for a contradiction that $v \notin \cF(x)$.
Since $f^L$ and $f^R$ are continuous
there exists a neighbourhood $N$ of $x$ such that $v \notin K$ where
\begin{equation}
K = {\rm co} \left[ \bigcup_{y \in N} \cF(y) \right].
\nonumber
\end{equation}
Since $\phi$ is continuous there exists $\delta > 0$ such that $\phi(s) \in N$
for all $s \in (t-\delta,t+\delta)$.
But $\dot{\phi}(t) \notin K$,
thus there exists $s \in (t,t+\delta)$ such that
$\phi(s) = x + (s-t) p$ where $p \notin K$.

Next we use the knowledge that $\phi$ is a Filippov solution
to write $\phi(s) = x + (s-t) q$ where $q \in K$, giving a contradiction.
Since $\phi$ is absolutely continuous, we can write
\begin{equation}
\phi(s) = x + \int_t^s u(\tilde{t}) \,d\tilde{t},
\label{eq:dphidtProof20}
\end{equation}
where $u$ is Lebesgue integrable \cite[pg.~125]{RoFi10}.
Since $\phi$ is a Filippov solution we have
$u(\tilde{t}) \in \cF(\phi(\tilde{t})) \subset K$ for almost all $\tilde{t} \in (t-\delta,t+\delta)$.
Since $K$ is convex, by \eqref{eq:dphidtProof20} there exists $q \in K$ such that $\phi(s) = x + (s-t) q$
(this follows from a version of the mean value theorem, see for instance \cite[pg.~63]{Fi88}).
This is a contradiction, hence $v \in \cF(x)$.

If $x_1 < 0$ [resp.~$x_1 > 0$] then \eqref{eq:cF} immediately gives $v = f^L(x)$ [resp.~$v = f^R(x)$].
It remains for us to consider the case $x_1 = 0$.
Thus we can assume $f^L_1(x) \ne f^R_1(x)$.
Suppose for a contradiction that $v_1 > 0$.
Then $\dot{\phi}(t) = v$ implies there exists $\delta > 0$ such that $\phi_1(s) > 0$ for all $s \in (t,t+\delta)$.
But, using the form \eqref{eq:dphidtProof20},
then $u(\tilde{t}) = f^R(\phi(\tilde{t}))$ for almost all $\tilde{t} \in (t,t+\delta)$.
By taking $s \to t$ from above we obtain $v = f^R(x)$ from the continuity of $f^R$.
By similarly taking $s \to t$ from below we obtain $v = f^L(x)$.
But $f^L_1(x) \ne f^R_1(x)$ so we have a contradiction.
For similar reasons we cannot have $v_1 < 0$.
Therefore $v_1 = 0$.
Finally, since $v \in \cF(x)$
and $f^L_1(x) \ne f^R_1(x)$, we have $v = f^S(x)$ and $x \in \Sigma_{\rm slide}$.
\end{proof}

\section{The approximate system as a time-scaled linearly homogeneous system}
\label{sec:lh}
\setcounter{equation}{0}

In this section we provide intuition to Theorem \ref{th:asyStabImpliesExpStab}
by considering simple scalings of time and space.
These scaling are also central to the proofs of Theorems \ref{th:expStabEquiv} and \ref{th:robust}.

A vector field $g : \mathbb{R}^n \to \mathbb{R}^n$ is {\em linearly homogeneous}
(or {\em homogeneous with degree $1$}) if
$g(\gamma x) = \gamma g(x)$ for all $x \in \mathbb{R}^n$ and all $\gamma > 0$.
If $g$ is linearly homogeneous and $\xi(t)$ is a solution to $\dot{x} = g(x)$,
then, for any $\gamma > 0$, $\gamma \xi(t)$ is also a solution to $\dot{x} = g(x)$.
That is, solutions to $\dot{x} = g(x)$ behave in the same way at all spatial scales.
Solutions converge to (or diverge from) the origin at a rate that is independent of the distance from the origin.
Due to this property, if $\bO$ is asymptotically stable then it is also exponentially stable.

If $c_1 < 0$ then Filippov solutions to \eqref{eq:F} in $\Omega_L \cup \Sigma$
have the same property but with a time scaling.
To see this, first observe that if $c_1 < 0$ then Filippov solutions cannot escape $\Omega_L \cup \Sigma$ (using $\Omega = \mathbb{R}^n$).
Thus, by Lemma \ref{le:dphidt}, they evolve via $F^L(x) = A x$ and the sliding vector field of \eqref{eq:F}, call it $F^S(x)$.
Notice $F^L(x)$ is a linear system, while
\begin{equation}
F^S(x) = \frac{\left( I - \frac{c e_1^{\sf T}}{c_1} \right) A x}{1 - \frac{e_1^{\sf T} A x}{c_1}},
\label{eq:FS}
\end{equation}
obtained by evaluating \eqref{eq:fS}, is a scalar multiple of a linear system.
Intuitively the scalar multiple can be removed by an appropriate rescaling of time $t \mapsto s$,
resulting in the description
\begin{equation}
\frac{d x}{d s} = \begin{cases}
A x, & \text{while}~ x_1 < 0, \\
C x, & \text{while}~ x_1 = 0,
\end{cases}
\label{eq:dxds}
\end{equation}
where $C = \left( I - \frac{c e_1^{\sf T}}{c_1} \right) A$.
The system \eqref{eq:dxds} is linearly homogeneous,
from which we infer that solutions behave in the same way at all spatial scales, but with a mild adjustment to the speed of evolution.
This loosely explains why asymptotic stability implies exponential stability for \eqref{eq:F}. 

But on its own \eqref{eq:dxds} is ill-defined.
Theorem \ref{th:asyStabImpliesExpStab} can be proved
by first performing the spatial scaling $z = \frac{x}{\gamma}$, where $\gamma > 0$.
This transforms \eqref{eq:F} to
\begin{equation}
\dot{z} = \frac{1}{\gamma} F(\gamma z) = \begin{cases}
A z, & z_1 < 0, \\
\frac{1}{\gamma} c, & z_1 > 0.
\end{cases}
\label{eq:Fscaled}
\end{equation}
On the other hand we can arrive at \eqref{eq:Fscaled} by scaling time by $\frac{1}{\gamma}$ in the right half-system of \eqref{eq:F}.
This spatially-discontinuous time scaling is addressed in the next section.

\section{A discontinuous time scaling}
\label{sec:timeScaling}
\setcounter{equation}{0}

Consider a system of the form \eqref{eq:f}.
Given $\gamma \in (0,1]$ we construct the scaled system
\begin{equation}
\dot{x} = \begin{cases}
f^L(x), & x_1 < 0, \\
\frac{1}{\gamma} f^R(x), & x_1 > 0.
\end{cases}
\label{eq:fCheck}
\end{equation}
In this section we prove the following lemma
that connects Filippov solutions of \eqref{eq:f} to those of \eqref{eq:fCheck} (and vice-versa).
A similar result can be found in \cite[pg.~102]{Fi88}
for a system scaled by a continuous function of $x$.
In \eqref{eq:fCheck} time is scaled by a piecewise-constant function that is discontinuous on $\Sigma$.
In view of this discontinuity we require arguments additional to those given in \cite{Fi88} in order to prove Lemma \ref{le:discTimeScaling}.

\begin{lemma}
Suppose \eqref{eq:f} satisfies \eqref{eq:Ck} with $k = 1$.
Suppose $f^R_1(x) < 0$ for all $x \in \Sigma$.
\begin{enumerate}
\item
If $\phi : [0,T] \to \Omega$ is a Filippov solution to \eqref{eq:f}
then there exists a strictly increasing, Lipschitz function $p : [0,T] \to \mathbb{R}$,
with $\gamma t \le p(t) \le t$ for all $t \in [0,T]$, such that $\phi \left( p^{-1}(s) \right)$
is a Filippov solution to \eqref{eq:fCheck} for $s \in [0,p(T)]$.
\item
Conversely if $\psi : [0,S] \to \Omega$ is a Filippov solution to \eqref{eq:fCheck}
then such a $p$ exists so that $\psi(p(t))$ is a Filippov solution to \eqref{eq:f} for $t \in \left[ 0,p^{-1}(S) \right]$.
\item
Further, if the initial point of the given Filippov solution lies in $\Omega_L \cup \Sigma$
and $\left| \frac{f^L_1(x)}{f^R_1(x)} \right| \le M$ for all $x \in \Sigma_{\rm slide}$,
then $p(t) \ge \frac{t}{M+1}$ for all $t \in [0,T]$.
\end{enumerate}
\label{le:discTimeScaling}
\end{lemma}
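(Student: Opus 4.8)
The plan is to build the reparametrisation $p$ explicitly from a given Filippov solution. Suppose $\phi:[0,T]\to\Omega$ is a Filippov solution to \eqref{eq:f}. Because $f^R_1(x)<0$ on $\Sigma$, Theorem \ref{th:FilippovUniqueness} applies, so forward evolution is unique; moreover $\phi$ can never enter $\Omega_R$ while leaving it impossible to return (more precisely, $\phi$ decomposes into time intervals on which it lies in $\Omega_L$, in $\Sigma_{\rm slide}$, or in $\Omega_R$, and by Lemma \ref{le:dphidt} it satisfies $\dot\phi=f^L(\phi)$, $\dot\phi=f^S(\phi)$, or $\dot\phi=f^R(\phi)$ respectively at a.e.\ point). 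The time change should slow down time by a factor $\gamma$ exactly where the $R$-field acts, i.e.\ where $\phi\in\Omega_R$, and leave time untouched where the $L$-field acts. On $\Sigma_{\rm slide}$, note from \eqref{eq:fS} that the sliding field is a convex combination $(1-\lambda)f^L+\lambda f^R$ with $\lambda=\lambda(x)=\tfrac{f^L_1(x)}{f^L_1(x)-f^R_1(x)}\in[0,1]$, so the natural local scaling factor there is $1-\lambda(1-\gamma)\in[\gamma,1]$. Concretely, I would define
\begin{equation}
p(t)=\int_0^t r(\phi(\tau))\,d\tau,
\nonumber
\end{equation}
where $r(x)=1$ on $\Omega_L$, $r(x)=\gamma$ on $\Omega_R$, and $r(x)=1-\lambda(x)(1-\gamma)$ on $\Sigma$ (and set $r$ arbitrarily in $[\gamma,1]$ on the measure-zero exceptional set where $f^L_1=f^R_1=0$, which $\phi$ visits on a set of times we can ignore). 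Since $r$ takes values in $[\gamma,1]$, $p$ is Lipschitz, strictly increasing (its a.e.\ derivative is $\ge\gamma>0$), and satisfies $\gamma t\le p(t)\le t$; hence $p^{-1}$ exists and is Lipschitz as well.

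The substance is then to verify that $\phi(p^{-1}(s))$ actually is a Filippov solution of \eqref{eq:fCheck}. Write $\psi(s)=\phi(p^{-1}(s))$. Away from $\Sigma$ this is the classical chain-rule computation: at times $\tau$ where $\phi$ is differentiable and $\phi_1(\tau)<0$ we get $\psi'=\dot\phi/r=f^L(\phi)$, and where $\phi_1(\tau)>0$ we get $\psi'=\dot\phi/r=f^R(\phi)/\gamma$, matching the two branches of \eqref{eq:fCheck}. On the sliding portion one computes $\psi'=f^S(\phi)/(1-\lambda(1-\gamma))$ and must check this lies in $\cF$ for \eqref{eq:fCheck} evaluated on $\Sigma$, i.e.\ in $\{(1-\mu)f^L+\tfrac{\mu}{\gamma}f^R\mid 0\le\mu\le1\}$; substituting the expression \eqref{eq:fS} for $f^S$, a short algebraic check shows this holds with $\mu=\tfrac{\lambda\gamma}{1-\lambda(1-\gamma)}\in[0,1]$, and in fact $\psi'$ is precisely the sliding vector field $\check f^S$ of the scaled system — which is reassuring but, since we only need membership in $\cF$, not logically essential. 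The one genuine subtlety is regularity: $\psi$ must be shown absolutely continuous (it is, as a Lipschitz reparametrisation of an absolutely continuous function by a Lipschitz map), and one must be careful that the chain rule is valid a.e.\ even though $r\circ\phi$ is only measurable; this is handled because $p$ is Lipschitz and $p^{-1}$ is Lipschitz, so $\psi'(s)=\dot\phi(p^{-1}(s))/r(\phi(p^{-1}(s)))$ at a.e.\ $s$ by the change-of-variables formula for absolutely continuous functions. Part ii) is the mirror image: given a Filippov solution $\psi$ of \eqref{eq:fCheck}, define $q(s)=\int_0^s \tilde r(\psi(\sigma))\,d\sigma$ with $\tilde r=1/r$ evaluated appropriately (value $1$ on $\Omega_L$, $1/\gamma$ on $\Omega_R$, the reciprocal expression on $\Sigma$), set $p=q^{-1}$, and run the same argument in reverse.

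For part iii), assume the initial point lies in $\Omega_L\cup\Sigma$ and $|f^L_1/f^R_1|\le M$ on $\Sigma_{\rm slide}$. Under the sliding bound, $\lambda(x)=\tfrac{f^L_1}{f^L_1-f^R_1}=\tfrac{|f^L_1/f^R_1|}{|f^L_1/f^R_1|+1}\le\tfrac{M}{M+1}$ on $\Sigma_{\rm slide}$, so on $\Sigma$ the scaling factor obeys $r(x)=1-\lambda(1-\gamma)\ge 1-\tfrac{M}{M+1}=\tfrac{1}{M+1}$ (using $\gamma\le1$; when $\gamma$ could help we only lose by dropping it). Since the solution starts in $\Omega_L\cup\Sigma$ and $f^R_1<0$ on $\Sigma$ prevents escape into $\Omega_R$, we have $r(\phi(\tau))\in\{1\}\cup[\tfrac1{M+1},1]\subseteq[\tfrac1{M+1},1]$ for all $\tau$, whence $p(t)=\int_0^t r(\phi)\,d\tau\ge\tfrac{t}{M+1}$. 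I expect the main obstacle to be the careful bookkeeping around the exceptional set where $f^L_1=f^R_1=0$ (where $f^S$ and hence $\lambda$ are undefined) and, relatedly, a clean argument that $\phi$ spends "negligible" time there — or alternatively phrasing the definition of $r$ so that this issue simply does not arise — together with making the a.e.\ chain rule under a merely measurable (not continuous) integrand $r\circ\phi$ fully rigorous; everything else is routine convexity algebra and Lipschitz calculus.
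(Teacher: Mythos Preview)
Your proposal is correct and essentially identical to the paper's proof: your scaling factor $r$ coincides algebraically with the paper's function $a$ (expand $1-\lambda(1-\gamma)$ with $\lambda=f^L_1/(f^L_1-f^R_1)$ to recover $a_0=(\gamma f^L_1-f^R_1)/(f^L_1-f^R_1)$ on $\Sigma$), and $p$ is defined by the same integral of this factor along $\phi$. One simplification you can make: the hypothesis $f^R_1<0$ on $\Sigma$ forces the set $\{f^L_1=f^R_1=0\}\cap\Sigma$ to be empty, so that worry evaporates (the only measure-zero nuisance is $\Sigma_{\rm cross}$, hit at most once), and the paper resolves your remaining regularity concerns exactly along the lines you anticipate --- continuity of $a\circ\phi$ at a.e.\ $t$ gives measurability, and the Lipschitz property of $p^{-1}$ together with the Lusin $N$ property handles absolute continuity and the a.e.\ chain rule.
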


Notice that part (iii) of Lemma \ref{le:discTimeScaling} provides a lower bound on $p(t)$ that,
unlike the bound in part (i), does not tend to $0$ as $\gamma \to 0$.

\begin{proof}
Here we prove parts (i) and (iii).
For brevity a proof of part (ii) is omitted as it can be proved in the same way as part (i).

\myStep{1}{Compare the sliding vector fields of \eqref{eq:f} and \eqref{eq:fCheck}.}
Let $h^L = f^L$ and $h^R = \frac{1}{\gamma} f^R$ denote the components of \eqref{eq:fCheck}.
Let $h^S$ denote the sliding vector field for \eqref{eq:fCheck}.
Then for each $Z \in \{ L, R, S \}$ we have $f^Z(x) = a(x) h^Z(x)$ where
\begin{equation}
a(x) = \begin{cases}
1, & x_1 < 0, \\
a_0(x), & x_1 = 0, \\
\gamma, & x_1 > 0,
\end{cases}
\label{eq:alpha}
\end{equation}
and
\begin{equation}
a_0(x) = \frac{\gamma f^L_1(x) - f^R_1(x)}{f^L_1(x) - f^R_1(x)}.
\label{eq:alpha0}
\end{equation}
Notice that $\gamma < a_0(x) \le 1$ for any $x \in \Sigma_{\rm slide}$.
The condition $f^R_1 < 0$ implies $\Sigma = \Sigma_{\rm slide} \cup \Sigma_{\rm cross}$
and that $\phi(t)$ intersects $\Sigma_{\rm cross}$ at most once.

\myStep{2}{Construct $p$.}
Let $\Lambda$ be the set of all $t \in (0,T)$ for which $\phi(t)$ 
is differentiable and $\phi(t) \notin \Sigma_{\rm cross}$.
Then \eqref{eq:dphidt} holds for all $t \in \Lambda$ and if $\phi_1(t) = 0$ then $\phi(t) \in \Sigma_{\rm slide}$.
Observe $\Lambda$ has full measure on $[0,T]$ because $\phi$ is absolutely continuous.

We now show $a(\phi(t))$ is continuous at all $t \in \Lambda$.
If $\phi_1(t) \ne 0$ then $a(\phi(t))$ is continuous at $t$ because $a$ is continuous at $\phi(t)$.
If $\phi_1(t) = 0$ then $\dot{\phi}_1(t) = 0$ by \eqref{eq:dphidt} and there are the following two cases.
If $f^L_1(\phi(t)) > 0$ then there exists $\delta > 0$ such that $\phi_1(s) = 0$ for all $s \in (t-\delta,t+\delta)$,
and so $a(\phi(t))$ is continuous at $t$ because $a_0$ is continuous.
Otherwise $f^L_1(\phi(t)) = 0$ in which case $a(\phi(t))$ is continuous at $t$ because $a_0(\phi(t)) = 1$.

Consequently $a(\phi(t))$ is measurable by Lusin's theorem \cite[pg.~518]{So14}
and so also Lebesgue integrable \cite[pg.~74]{RoFi10}.
Therefore we can define $p : [0,T] \to \mathbb{R}$ by
\begin{equation}
p(t) = \int_0^t a(\phi(\tilde{t})) \,d\tilde{t},
\label{eq:u}
\end{equation}
and $\frac{d p}{d t} = a(\phi(t))$ at all $t \in \Lambda$.
Since $\gamma \le a_0(\phi(t)) \le 1$ for all $t \in [0,T]$,
we have $\gamma (t-s) \le p(t) - p(s) \le t-s$ for all $t \in [0,T]$ with $t \ge s$,
thus $p$ satisfies the properties stated in part (i).

\myStep{3}{Form $q = p^{-1}$ and prove part (iii).}
Thus $p$ has a continuous and strictly increasing inverse $q : [0,p(T)] \to [0,T]$
and $q$ is Lipschitz with constant $\frac{1}{\gamma}$:
\begin{equation}
q(t) - q(s) \le \frac{1}{\gamma}(t - s), \qquad \text{for all~} 0 \le s \le t \le p(T).
\label{eq:qLipschitz}
\end{equation}
Also with $M$ as given in part (iii),
we have $a_0(\phi(t)) \ge \frac{1}{1+M}$ for all $t \in \Lambda$ for which $\phi_1(t) = 0$.
Thus $1+M$ is also a Lipschitz constant for $q$,
thus $p(t) \ge \frac{t}{M+1}$ for all $t \in [0,T]$.

\myStep{4}{Show $\phi(q(t))$ is a Filippov solution to \eqref{eq:fCheck}.}
Let $\psi(t) = \phi(q(t))$.
We first show $\psi(t)$ is absolutely continuous.
Choose any $\ee > 0$.
Since $\phi$ is absolutely continuous there exists $\delta > 0$ such that
for any pairwise disjoint sub-intervals $[\hat{s}_k,\hat{t}_k] \subset [0,T]$, if $\sum_k (\hat{t}_k-\hat{s}_k) < \frac{\delta}{\gamma}$
then $\sum_k \| \phi(\hat{t}_k) - \phi(\hat{s}_k) \| < \ee$.
Now choose any pairwise disjoint sub-intervals $[s_k,t_k] \subset [0,p(T)]$ for which $\sum_k (t_k - s_k) < \delta$.
By the Lipschitz property \eqref{eq:qLipschitz} we have $q(t_k) - q(s_k) < \frac{1}{\gamma}(t_k - s_k)$ for each $k$,
and the intervals $[q(s_k),q(t_k)]$ are pairwise disjoint because $q$ is strictly increasing,
thus $\sum_k \left( q(t_k) - q(s_k) \right) < \frac{\delta}{\gamma}$
and hence $\sum_k \| \psi(t_k) - \psi(s_k) \| < \ee$.
Thus $\psi(t)$ is absolutely continuous.

Finally, $p(\Lambda)$ has full measure on $[0,p(T)]$ because $p$ is absolutely continuous
so satisfies the Lusin $N$ property \cite[pg.~388]{Bo07}.
For any $t \in p(\Lambda)$, we have $q(t) \in \Lambda$
so $\frac{d q}{d t} = \frac{1}{a(\phi(q(t)))} = \frac{1}{a(\psi(t))}$.
Thus $\dot{\psi}(t) = \dot{\phi}(q(t)) \frac{d q}{d t} = \frac{f^Z(\psi(t))}{a(\psi(t))}$
for the index $Z \in \{ L, R, S \}$ specified by Lemma \ref{le:dphidt}.
This is equal to $h^Z(\psi(t))$, therefore $\psi(t)$ is a Filippov solution to \eqref{eq:fCheck} for $s \in [0,p(T)]$.
\end{proof}

\section{Proofs of Theorems \ref{th:expStabEquiv}--\ref{th:robust}}
\label{sec:mainProof}
\setcounter{equation}{0}

If \eqref{eq:f} satisfies the conditions of Theorem \ref{th:FilippovUniqueness},
then \eqref{eq:f} generates a unique {\em semi-flow} $\varphi_t(x)$.
That is, for each $x \in \Omega$, $\varphi_t(x) \in \Omega$ is the Filippov solution to \eqref{eq:f} with initial point $x$ (i.e.~$\varphi_0(x) = x$).
This solution is defined for all $t \in [0,T]$, for some $T > 0$ that in general depends on $x$.
The semi-flow satisfies the group property
\begin{equation}
\varphi_{s+t}(x) = \varphi_s(\varphi_t(x)),
\label{eq:semiFlowGroupProperty}
\end{equation}
for all $s, t \ge 0$ for which $s + t \le T$.

\begin{lemma}
Suppose $x = \bO$ is an asymptotically stable equilibrium of a semi-flow $\varphi_t(x)$.
Then $\varphi_t(x) \to \bO$ uniformly on some $B_\delta(\bO)$
(that is, for all $\ee > 0$ there exists $T > 0$ such that $\varphi_t(x) \in B_\ee(\bO)$ for all $x \in B_\delta(\bO)$ and all $t \ge T$).
\label{le:uniformConvergence}
\end{lemma}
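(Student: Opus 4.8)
The plan is to derive uniform convergence from pointwise convergence plus Lyapunov stability by a standard compactness-and-continuity argument, where the continuity input is Theorem~\ref{th:FilippovContinuity} applied with a trivial (single-element) parameter family. First I would fix $\delta_0 > 0$ small enough that $\overline{B}_{\delta_0}(\bO) \subset \Omega$ and that Lyapunov stability gives a "trapping" radius: choose $\delta_1 \in (0,\delta_0]$ such that every Filippov solution starting in $B_{\delta_1}(\bO)$ stays in $B_{\delta_0}(\bO)$ for all $t \ge 0$. This guarantees that all the solutions we consider remain in the compact set $U := \overline{B}_{\delta_1}(\bO)$ for all forward time, so there are no escape or finite-time issues. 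The set $\delta$ in the statement will ultimately be $\delta_1$ (or a little smaller).

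The core of the argument is a local-to-global step: I claim that for every $x \in U$ and every $\ee > 0$ there exist a time $T_x > 0$ and a radius $\rho_x > 0$ such that $\varphi_t(y) \in B_\ee(\bO)$ for all $y \in B_{\rho_x}(x) \cap U$ and all $t \ge T_x$. To get this, first use pointwise asymptotic convergence: pick $T_x$ with $\varphi_{T_x}(x) \in B_{\ee/2}(\bO)$, and in fact (shrinking the target further using Lyapunov stability applied at the origin) arrange that $\varphi_t(x) \in B_{\ee/3}(\bO)$ for all $t \ge T_x$. Then apply Theorem~\ref{th:FilippovContinuity} on the fixed interval $[0,T_x]$ with the constant family $f_\mu \equiv f$ (so the perturbation hypothesis is vacuous): it yields continuous dependence of $\varphi_{T_x}(\cdot)$ near $x$, hence a $\rho_x$ with $\varphi_{T_x}(y) \in B_{\ee/2}(\bO)$ for all $y \in B_{\rho_x}(x)$. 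Finally, Lyapunov stability (used once more, with the $\ee/2$ ball as the "inner" ball and $\ee$ as the "outer" tolerance — possibly after further shrinking $\rho_x$) propagates this forward: $\varphi_{T_x + t}(y) = \varphi_t(\varphi_{T_x}(y)) \in B_\ee(\bO)$ for all $t \ge 0$, using the group property \eqref{eq:semiFlowGroupProperty}. Then cover the compact set $U$ by finitely many balls $B_{\rho_{x_i}}(x_i)$, set $T = \max_i T_{x_i}$, and conclude $\varphi_t(y) \in B_\ee(\bO)$ for all $y \in U$ and all $t \ge T$.

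The main obstacle I anticipate is bookkeeping the order of the $\ee$-reductions so that Lyapunov stability can be invoked cleanly at the final step, and making sure the continuity theorem is being applied legitimately: Theorem~\ref{th:FilippovContinuity} is stated for a family $f_\mu$ with a smallness hypothesis on $f_\mu - f_{\tilde\mu}$, so I must instantiate it with the trivial family and note that its conclusion then reads as continuous dependence of solutions on their initial point over a fixed time interval — which requires that all the relevant solutions stay in $\Omega$ on $[0,T_x]$, exactly what the trapping radius $\delta_1$ secures. A secondary subtlety is that Theorem~\ref{th:FilippovContinuity} produces, for a given solution, \emph{some} nearby solution rather than asserting uniqueness; but under the standing hypotheses of this section (those of Theorem~\ref{th:FilippovUniqueness}, which is what makes $\varphi_t$ a well-defined semi-flow in the first place) forward solutions are unique, so "the" solution $\varphi_t(y)$ is unambiguous and the continuity statement genuinely controls it. Everything else is the routine finite-subcover compactness argument.
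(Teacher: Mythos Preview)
Your overall architecture---pointwise convergence, continuity at a fixed time, Lyapunov stability to propagate forward, then a finite subcover---is exactly the paper's proof. But there is a genuine gap at the continuity step.

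Theorem~\ref{th:FilippovContinuity} compares Filippov solutions with the \emph{same initial point} under two nearby vector fields $f_\mu$ and $f_{\tilde\mu}$. Instantiating it with the constant family $f_\mu \equiv f$ makes the hypothesis vacuous, but the conclusion is then vacuous too: for any solution $\phi$ there exists a solution $\xi$ with $\xi(0)=\phi(0)$ close to $\phi$---just take $\xi=\phi$. It says nothing about $\varphi_{T_x}(y)$ for $y$ near $x$. Nor can you rescue this by folding the initial shift into the vector field via $f_\mu(z)=f(z+\mu)$, because that moves the discontinuity surface off $z_1=0$ and the perturbed system is no longer of the form \eqref{eq:fmu}. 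What you actually need is continuous dependence of the semi-flow on the initial condition, which is a separate (standard) result in Filippov's theory; the paper simply invokes it as ``since $\varphi_t(x)$ is a continuous function.''

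On the bookkeeping: targeting $B_{\ee/2}(\bO)$ at time $T_x$ does not suffice, because Lyapunov stability only guarantees some threshold $\delta'(\ee)$, possibly much smaller than $\ee/2$, such that $B_{\delta'}(\bO)$ is forward-trapped in $B_\ee(\bO)$. The paper handles this cleanly by first fixing $\ee$, then choosing the Lyapunov threshold $\delta_1=\delta_1(\ee)$, and then aiming $\varphi_{\hat T(x)}(x)$ into $B_{\delta_1/2}(\bO)$ so that nearby $y$ land in $B_{\delta_1}(\bO)$. Your $\delta_1$ is set up as a trapping radius independent of $\ee$, which is not what is needed at the propagation step.
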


This result is proved in \cite{DiNo08} by first establishing equicontinuity of $\varphi_t(x)$.
In Appendix \ref{app:uniformConvergence} we provide a proof of Lemma \ref{le:uniformConvergence}
that avoids equicontinuity by introducing an additional small quantity $\delta_1 > 0$.
A generalisation of Lemma \ref{le:uniformConvergence} that allows
non-unique forward evolution is proved by contradiction in \cite[pg.~160]{Fi88}.

Next we prove Theorem \ref{th:asyStabImpliesExpStab}.
We then prove Theorems \ref{th:expStabEquiv} and \ref{th:robust} together.

\begin{proof}[Proof of Theorem \ref{th:asyStabImpliesExpStab}]~\\
We suppose $x = \bO$ is asymptotically stable and show it is also exponentially stable
(the converse is trivial by Definition \ref{df:stable}).
Notice asymptotic stability implies $c_1 < 0$
(this enables us to apply Lemma \ref{le:discTimeScaling}).

Let $\psi_t(x)$ denote the semi-flow of \eqref{eq:F}
(defined for all $x \in \mathbb{R}^n$ and all $t \ge 0$ by Theorem \ref{th:FilippovUniqueness}).
Since $x = \bO$ is asymptotically stable there exists $\delta > 0$ such that
\begin{equation}
\psi_t(x) \in B_1(\bO), \qquad \text{for all}~ x \in \overline{B}_\delta(\bO) ~\text{and all}~ t \ge 0,
\label{eq:asyStabProofCond1}
\end{equation}
and $\psi_t(x) \to \bO$ as $t \to \infty$ for all $x \in \overline{B}_\delta(\bO)$.
This convergence is uniform (Lemma \ref{le:uniformConvergence}) thus there exists $T > 0$ such that
\begin{equation}
\psi_t(x) \in B_{\frac{\delta}{2}}(\bO), \qquad \text{for all}~ x \in \overline{B}_\delta(\bO) ~\text{and all}~ t \ge T.
\label{eq:asyStabProofCond2}
\end{equation}

Choose any $x \in \overline{B}_\delta(\bO)$, with $x \ne \bO$, and let $\gamma = \frac{\| x \|}{\delta} \in (0,1]$.
By part (i) of Lemma \ref{le:discTimeScaling} applied to the point $\frac{x}{\gamma} \in \overline{B}_\delta(\bO)$ and the end-time $\frac{T}{\gamma}$,
there exists a strictly increasing, Lipschitz function $p : \left[ 0, \frac{T}{\gamma} \right] \to \mathbb{R}$,
with $\gamma t \le p(t) \le t$ for all $t \in \left[ 0, \frac{T}{\gamma} \right]$,
such that $\psi_{p^{-1}(s)} \big( \frac{x}{\gamma} \big)$ is a solution to \eqref{eq:Fscaled}.
Notice $p^{-1}(s)$ is defined for at least all $s \in [0,T]$.
But \eqref{eq:Fscaled} is obtained via the scaling $z = \frac{x}{\gamma}$,
therefore $\frac{1}{\gamma} \psi_s(x)$ is also a Filippov solution to \eqref{eq:Fscaled}.
These two solutions have the same initial point, thus, by the uniqueness of the semi-flow, they coincide.
Therefore $\left\| \psi_s(x) \right\| = \gamma \left\| \psi_{p^{-1}(s)} \big( \frac{x}{\gamma} \big) \right\|$ for all $s \in [0,T]$.
Then by \eqref{eq:asyStabProofCond1} applied to the point $\frac{x}{\gamma} \in \overline{B}_\delta(\bO)$ we have
\begin{equation}
\left\| \psi_t(x) \right\| \le \gamma = \frac{1}{\delta} \| x \|, \qquad \text{for all}~ x \in \overline{B}_\delta(\bO) ~\text{and all}~ t \in [0,T].
\label{eq:asyStabProofCond3}
\end{equation}
Similarly by \eqref{eq:asyStabProofCond2} we have
\begin{equation}
\left\| \psi_T(x) \right\| \le \frac{\gamma \delta}{2} = \frac{1}{2} \| x \|, \qquad \text{for all}~ x \in \overline{B}_\delta(\bO).
\label{eq:asyStabProofCond4}
\end{equation}
By applying \eqref{eq:asyStabProofCond3} and \eqref{eq:asyStabProofCond4} recursively we obtain
$\left\| \psi_t(x) \right\| \le \frac{1}{2^k \delta} \,\| x \|$
for all $x \in \overline{B}_\delta(\bO)$,
all integers $k \ge 0$,
and all $k T \le t \le (k+1) T$.
Thus $x = \bO$ is exponentially stable with
$\alpha = \frac{2}{\delta}$ and $\beta = \frac{\ln(2)}{T}$
in Definition \ref{df:stable}.
\end{proof}

\begin{proof}[Proof of Theorems \ref{th:expStabEquiv} and \ref{th:robust}]~\\
Let $\dot{x} = f(x)$ and $\dot{x} = g(x)$ be Filippov systems of the form \eqref{eq:f} satisfying \eqref{eq:Ck} with $k = 1$.
Write
\begin{align}
f(x) &= \begin{cases}
A x + E^L_f(x), & x_1 < 0, \\
c + E^R_f(x), & x_1 > 0,
\end{cases}
\label{eq:fProof} \\
g(x) &= \begin{cases}
\tilde{A} x + E^L_g(x), & x_1 < 0, \\
\tilde{c} + E^R_g(x), & x_1 > 0,
\end{cases}
\label{eq:gProof}
\end{align}
where $c_1, \tilde{c}_1 < 0$ and
\begin{equation}
\begin{split}
E^L_f(x), E^L_g(x) ~\text{are}~ \co(x), \\
E^R_f(x), E^R_g(x) ~\text{are}~ \cO(x).
\end{split}
\label{eq:errorTerms}
\end{equation}
Suppose $\bO$ is an exponentially stable equilibrium of $\dot{x} = f(x)$.
Below we show that $\bO$ is also exponentially stable for $\dot{x} = g(x)$,
assuming $\| A - \tilde{A} \|$ and $\| c - \tilde{c} \|$ are sufficiently small.
This will complete the proofs of Theorems \ref{th:expStabEquiv} and \ref{th:robust} because we may take
(i) $g$ to be the leading-order approximation to $f$,
(ii) $f$ to be the leading-order approximation to $g$, and
(iii) $f$ and $g$ to be instances of \eqref{eq:Ffamily}.
In (iii) to prove Theorem \ref{th:robust}
the assumption $\tilde{c}_1 < 0$ is justified because $c_1 < 0$ (by the asymptotic stability of $\bO$ for \eqref{eq:F})
and $\| c - \tilde{c} \|$ can be assumed to be smaller than $|c_1|$.

\myStep{1}{Initial consequences of exponential stability.}
Let $\varphi_t(x)$ and $\psi_t(x)$ denote the semi-flows of $\dot{x} = f(x)$ and $\dot{x} = g(x)$, respectively.
The exponential stability assumption implies there exist
$\alpha_0 \ge 1$, $\beta_0 > 0$, and $\delta_0 > 0$ such that
\begin{equation}
\| \varphi_t(x) \| \le \alpha_0 \re^{-\beta_0 t} \| x \|,
\qquad \text{for all}~ x \in B_{\delta_0}(\bO) ~\text{and all}~ t \ge 0.
\label{eq:expStabEquivProof0}
\end{equation}
Assume $\delta_0$ is small enough that $B_{2 \delta_0 \alpha_0}(\bO) \subset \Omega$.
Let $f^L$ and $f^R$ denote the left and right half-systems of \eqref{eq:fProof}
and let $g^L$ and $g^R$ denote the left and right half-systems of \eqref{eq:gProof}.
Then there exists $M > 0$ such that
\begin{equation}
\frac{2 \alpha_0 \left| f^L_1(x) \right|}{\| x \| \left| f^R_1(x) \right|},
\frac{2 \alpha_0 \left| g^L_1(x) \right|}{\| x \| \left| g^R_1(x) \right|} \le M,
\qquad \text{for all}~ x \in B_{2 \delta_0 \alpha_0}(\bO).
\label{eq:expStabEquivProof1}
\end{equation}
Let
\begin{equation}
T = \frac{(M+1) \ln( 4 \alpha_0)}{\beta_0}.
\label{eq:TProof}
\end{equation}

\myStep{2}{Apply spatial blow-up and scale right half-systems by $\gamma$.}
Given $0 < \gamma < \delta_0$ we consider the spatial scaling $z = \frac{x}{\gamma}$.
This transforms \eqref{eq:fProof} and \eqref{eq:gProof} to
\begin{align}
\dot{z} &= \frac{1}{\gamma} f(\gamma z) = \begin{cases}
A z + \frac{1}{\gamma} E^L_f(\gamma z), & z_1 < 0, \\
\frac{1}{\gamma} \left( c + E^R_f(\gamma z) \right), & z_1 > 0,
\end{cases}
\label{eq:fScaledProof} \\
\dot{z} &= \frac{1}{\gamma} g(\gamma z) = \begin{cases}
\tilde{A} z + \frac{1}{\gamma} E^L_g(\gamma z), & z_1 < 0, \\
\frac{1}{\gamma} \left( \tilde{c} + E^R_g(\gamma z) \right), & z_1 > 0,
\end{cases}
\label{eq:gScaledProof}
\end{align}
with semi-flows $\frac{1}{\gamma} \varphi_t(\gamma z)$ and $\frac{1}{\gamma} \psi_t(\gamma z)$, respectively.
By scaling their right half-systems by $\gamma$ we obtain two new systems
\begin{align}
\dot{z} &= \check{f}(z) = \begin{cases}
A z + \frac{1}{\gamma} E^L_f(\gamma z), & z_1 < 0, \\
c + E^R_f(\gamma z), & z_1 > 0,
\end{cases}
\label{eq:fCheckProof} \\
\dot{z} &= \check{g}(z) = \begin{cases}
\tilde{A} z + \frac{1}{\gamma} E^L_g(\gamma z), & z_1 < 0, \\
\tilde{c} + E^R_g(\gamma z), & z_1 > 0.
\end{cases}
\label{eq:gCheckProof}
\end{align}
We denote the semi-flows of \eqref{eq:fCheckProof} and \eqref{eq:gCheckProof}
by $\check{\varphi}_s(z)$ and $\check{\psi}_s(z)$, respectively.

\myStep{3}{Apply Lemma \ref{le:discTimeScaling} to \eqref{eq:fCheckProof}.}
Define the compact set
\begin{equation}
U = \left\{ z \in \overline{B}_1(\bO) \,\big|\, z_1 \le 0 \right\}.
\label{eq:UProof}
\end{equation}
By \eqref{eq:expStabEquivProof0}, for any $z \in U$
the Filippov solution to \eqref{eq:fScaledProof} with initial point $z$
satisfies $\big\| \frac{1}{\gamma} \varphi_t(\gamma z) \big\| \le \alpha_0 \re^{-\beta_0 t}$,
so is contained in $B_{2 \alpha_0}(\bO)$ for all $t \ge 0$.
Let $\check{f}^L$ and $\check{f}^R$ denote the left and right components of \eqref{eq:fCheckProof}.
For any $z \in B_{2 \alpha_0}(\bO)$ we have
\begin{equation}
\frac{\left| \check{f}^L_1(z) \right|}{\left| \check{f}^R_1(z) \right|}
= \frac{\left| f^L_1(\gamma z) \right|}{\gamma \left| f^R_1(\gamma z) \right|}
= \frac{\| z \| \left| f^L_1(x) \right|}{\| x \| \left| f^R_1(x) \right|} \le M,
\nonumber
\end{equation}
by \eqref{eq:expStabEquivProof1}.
By parts (ii) and (iii) of Lemma \ref{le:discTimeScaling} applied to \eqref{eq:fCheckProof}
on $B_{2 \alpha_0}(\bO)$,
for any $z \in U$ there exists a strictly increasing, Lipschitz function $p_1 : [0,\infty) \to \mathbb{R}$, with $\frac{s}{M+1} \le p_1(s) \le s$ for all $s \ge 0$,
such that $\frac{1}{\gamma} \varphi_{p_1(s)}(\gamma z)$ is a Filippov solution to \eqref{eq:fCheckProof}.
By uniqueness of the semi-flow,
$\check{\varphi}_s(z) = \frac{1}{\gamma} \varphi_{p_1(s)}(\gamma z)$.
Then by \eqref{eq:expStabEquivProof0} applied to $\gamma z \in B_{\delta_0}(\bO)$,
\begin{equation}
\| \check{\varphi}_s(z) \| \le \alpha_0 \re^{\frac{-\beta_0 s}{M+1}},
\qquad \text{for all}~ z \in U ~\text{and all}~ s \ge 0,
\label{eq:expStabEquivProof30}
\end{equation}
where we have also used $p_1(s) \ge \frac{s}{M+1}$.

\myStep{4}{Apply Theorem \ref{th:FilippovContinuity}.}
By \eqref{eq:expStabEquivProof30}, $\check{\varphi}_s(z) \in B_{2 \delta_0 \alpha_0}(\bO)$
for all $z$ in the compact set $U$ and all $s \in [0,(M+1)T]$.
Then by Theorem \ref{th:FilippovContinuity} there exists $\eta > 0$ such that
if $\left\| \check{f}(z) - \check{g}(z) \right\| < \eta$ for all $z \in B_{2 \delta_0 \alpha_0}(\bO) \setminus \Sigma$, then
\begin{equation}
\left\| \check{\varphi}_s(z) - \check{\psi}_s(z) \right\| < \frac{1}{4},
\qquad \text{for all}~z \in U ~\text{and all}~s \in [0,(M+1)T].
\label{eq:expStabEquivProof40}
\end{equation}
By \eqref{eq:errorTerms} there exists $\delta > 0$ such that
\begin{align*}
\frac{\left\| E^L_f(x) \right\|}{\| x \|}, \frac{\left\| E^L_g(x) \right\|}{\| x \|} &< \frac{\eta}{3}, 
\qquad \text{for all $x \in B_\delta(\bO) \setminus \{ \bO \}$ with $x_1 < 0$}, \\
\left\| E^R_f(x) \right\|, \left\| E^R_g(x) \right\|  &< \frac{\eta}{3}, 
\qquad \text{for all $x \in B_\delta(\bO)$ with $x_1 > 0$}.
\end{align*}
Now assume
$\big\| (A - \tilde{A}) z \big\| < \frac{\eta}{3}$ for all $z \in B_{2 \delta_0 \alpha_0}(\bO)$,
and $\| c - \tilde{c} \| < \frac{\eta}{3}$.
Then from \eqref{eq:fCheckProof}--\eqref{eq:gCheckProof},
\begin{equation}
\left\| \check{f}(z) - \check{g}(z) \right\| < \eta,
\qquad \text{for all}~ z \in U ~\text{and all}~ 0 < \gamma < \delta.
\nonumber
\end{equation}
Thus from \eqref{eq:expStabEquivProof30} and \eqref{eq:expStabEquivProof40},
\begin{align}
\left\| \check{\psi}_s(z) \right\| 
&\le \left\| \check{\varphi}_s(z) - \check{\psi}_s(z) \right\| + \left\| \check{\varphi}_s(z) \right\| \nonumber \\
&\le \frac{1}{4} + \alpha_0 \re^{\frac{-\beta_0 s}{M+1}},
\qquad \text{for all}~z \in U,~s \in [0,(M+1)T], ~\text{and}~ 0 < \gamma < \delta.
\label{eq:expStabEquivProof48}
\end{align}

\myStep{5}{Apply Lemma \ref{le:discTimeScaling} to \eqref{eq:gCheckProof}.}
By \eqref{eq:expStabEquivProof48}, $\check{\psi}_s(z) \in B_{2 \alpha_0}(\bO)$ for all $s \in [0,(M+1)T]$.
By parts (i) and (iii) of Lemma \ref{le:discTimeScaling} applied to \eqref{eq:gCheckProof} on $B_{2 \alpha_0}(\bO)$,
for any $z \in U$ there exists a strictly increasing, Lipschitz function $p_2 : [0,(M+1)T] \to \mathbb{R}$,
with $\frac{s}{M+1} \le p_2(s) \le s$ for all $s \in [0,(M+1)T]$,
such that $\check{\psi}_{p_2^{-1}(t)}(z)$ is a Filippov solution to \eqref{eq:fScaledProof}.
Observe $p_2((M+1)T) \ge T$, so $p_2^{-1}(t)$ is defined for at least all $t \in [0,T]$.
By uniqueness of the semi-flow,
$\frac{1}{\gamma} \psi_t(\gamma z) = \check{\psi}_{p_2^{-1}(t)}(z)$.
Then by \eqref{eq:expStabEquivProof48},
\begin{equation}
\left\| \psi_t(\gamma z) \right\| \le \gamma \left( \frac{1}{4} + \alpha_0 \re^{\frac{-\beta_0 t}{M+1}} \right),
\qquad \text{for all}~z \in U,~t \in [0,T], ~\text{and}~ 0 < \gamma < \delta,
\label{eq:expStabEquivProof58}
\end{equation}
where we have also used $p_2^{-1}(t) \ge t$.

\myStep{6}{Final arguments.}
Choose any $x \in B_\delta(\bO) \setminus \{ \bO \}$ with $x_1 \le 0$ and let $\gamma = \| x \|$.
Then $z = \frac{x}{\gamma} \in U$ and so by \eqref{eq:expStabEquivProof58}
\begin{equation}
\left\| \psi_t(x) \right\| \le \left( \frac{1}{4} + \alpha_0 \re^{\frac{-\beta_0 t}{M+1}} \right) \| x \|,
\qquad \text{for all}~ t \in [0,T].
\label{eq:expStabEquivProof60}
\end{equation}
In particular this gives
\begin{align}
\left\| \psi_t(x) \right\| &\le 2 \alpha_0 \| x \|,
\qquad \text{for all}~ t \in [0,T],~\text{and} \label{eq:expStabEquivProof61} \\
\left\| \psi_T(x) \right\| &\le \frac{1}{2} \| x \|, \label{eq:expStabEquivProof62}
\end{align}
where we have used \eqref{eq:TProof}.
By recursively applying \eqref{eq:expStabEquivProof61}--\eqref{eq:expStabEquivProof62}
to $x = \psi_{k T}(x)$ for all integers $k \ge 1$, we obtain
\begin{equation}
\| \psi_t(x) \| \le \alpha \re^{-\beta t} \| x \|, \qquad \text{for all}~ x \in B_\delta(\bO) ~\text{with}~ x_1 \le 0 ~\text{and all}~ t \ge 0,
\label{eq:expStabEquivProof63}
\end{equation}
with $\alpha = 4 \alpha_0$ and $\beta = \frac{\ln(2)}{T}$.

For $x \in B_\delta(\bO)$ but with instead $x_1 > 0$, the Filippov solution $\psi_t(x)$ quickly arrives at $\Sigma$
because $g^R(x) \approx \tilde{c}$ where $\tilde{c}_1 < 0$.
It is a simple exercise to show that \eqref{eq:expStabEquivProof63} also holds for all $x \in B_\delta(\bO)$ with $x_1 > 0$
by using a slightly larger value of $\alpha$.
\end{proof}

\section{Chaotic convergence}
\label{sec:chaos}
\setcounter{equation}{0}

In this section we study the system
\begin{equation}
\dot{x} = \begin{cases}
\begin{bmatrix}
-0.1 & 1 & 0 & 0 \\
-9 & 0 & 1 & 0 \\
-4 & 0 & 0 & 1 \\
-0.4 & 0 & 0 & 0
\end{bmatrix} x, & x_1 \le 0, \\
\begin{bmatrix} -1 \\ 0.4 \\ -0.2 \\ -0.04 \end{bmatrix}, & x_1 \ge 0.
\end{cases}
\label{eq:4d}
\end{equation}
This is a four-dimensional instance of \eqref{eq:F} in the normal form of \cite{Si18d}.
The numbers in \eqref{eq:4d} have been chosen to provide a succinct example of `chaotic' convergence to $x = \bO$.
This highlights the potential difficulty in determining whether or not $\bO$
is stable for a given system of the form \eqref{eq:F}.

By Theorem \ref{th:FilippovUniqueness}, \eqref{eq:4d} induces a semi-flow $\psi_t(x)$
for all $x \in \mathbb{R}^4$ and $t \ge 0$.
The semi-flow involves sliding motion on the attracting sliding region $\left\{ x \in \Sigma \,\big|\, x_2 > 0 \right\}$.
Orbits slide on this region until reaching the two-dimensional tangency surface
$\Sigma_{\rm tang} = \left\{ x \in \Sigma \,\big|\, x_2 = 0 \right\}$ at a point with $x_3 < 0$
from which they undergo regular motion in $\Omega_L$ until returning to the attracting sliding region and the process repeats, see Fig.~\ref{fig:ex4d2c}.
The dynamics of \eqref{eq:4d} can therefore be characterised by the induced return map on $\Sigma_{\rm tang}$.
But we can obtain a simpler description of the dynamics by also
using the time-scaled linear homogeneity property of \eqref{eq:4d}.

\begin{figure}[b!]
\begin{center}
\includegraphics[width=12cm]{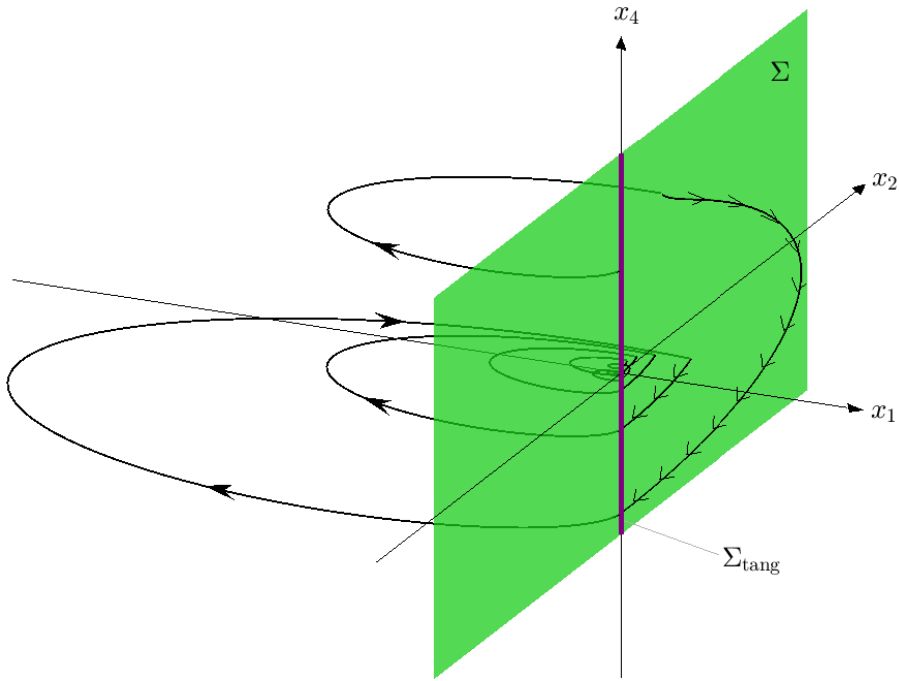}
\caption{
A typical Filippov solution of \eqref{eq:4d}.
The solution slides on $\Sigma$ until reaching $\Sigma_{\rm tang}$.
Note that this figure shows only three of the four variables.
\label{fig:ex4d2c}
} 
\end{center}
\end{figure}

\begin{figure}[b!]
\begin{center}
\includegraphics[width=12cm]{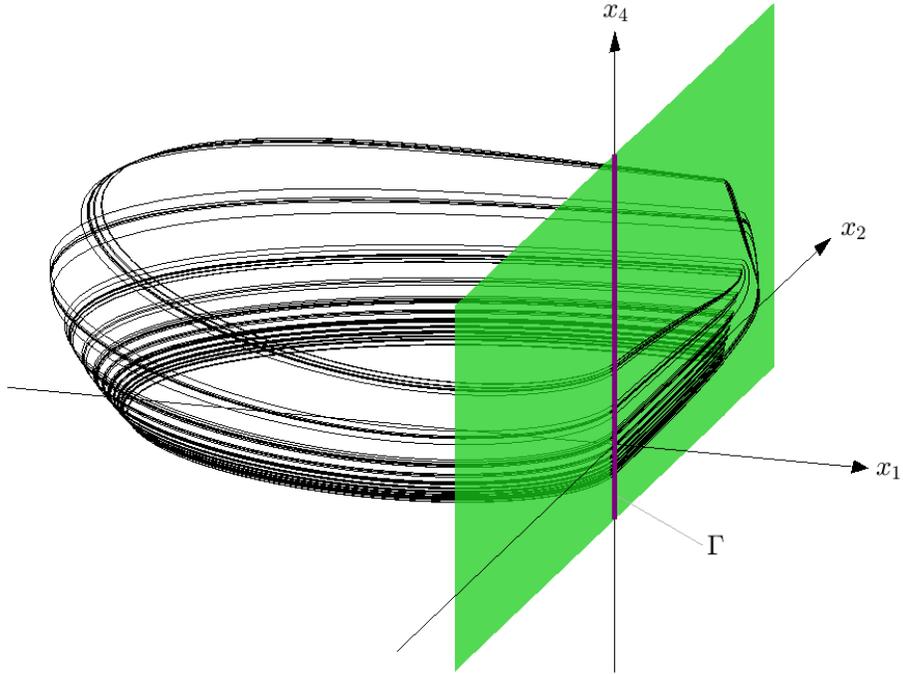}
\caption{
The Filippov solution of Fig.~\ref{fig:ex4d2c} projected onto the unit sphere $\mathbb{S}^3$.
The projected solution repeatedly intersects the one-dimensional manifold $\Gamma$ \eqref{eq:Gamma}
which is used to define the one-dimensional return map shown in Fig.~\ref{fig:ex4d2b}.
\label{fig:ex4d2a}
} 
\end{center}
\end{figure}

\begin{figure}[b!]
\begin{center}
\includegraphics[width=6cm]{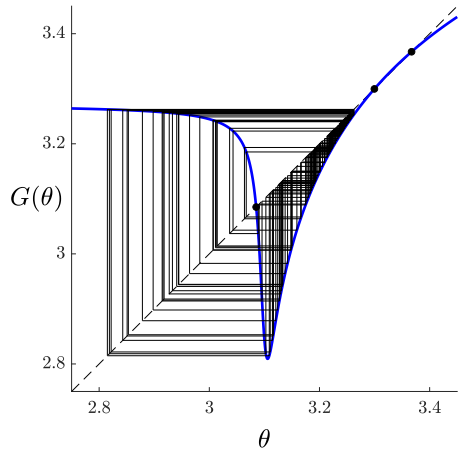}
\caption{
The return map for Filippov solutions of \eqref{eq:4d} projected onto $\mathbb{S}^3$
using the one-dimensional manifold $\Gamma$ as the domain of the map.
The map has three fixed points (black circles) that correspond to periodic orbits of the projected dynamics
(these correspond to Filippov solutions of \eqref{eq:4d} that spiral into the origin in a simple fashion).
We also show, as a cobweb diagram, the orbit of $G$ corresponding to the solution shown in Fig.~\ref{fig:ex4d2a}.
\label{fig:ex4d2b} 
} 
\end{center}
\end{figure}

Let $r_t(x) = \| \psi_t(x) \|$ using the Euclidean norm.
Then $\chi_t(x) = \frac{\psi_t(x)}{r_t(x)}$ is the projection of $\psi_t(x)$ onto the unit sphere,
$\mathbb{S}^3 = \{ x \,|\, x_1^2 + x_2^2 + x_3^2 + x_4^2 = 1 \}$.
Fig.~\ref{fig:ex4d2a} shows a typical projected orbit $\chi_t(x)$.
Projected orbits repeatedly intersect the projection of $\Sigma_{\rm tang}$, with $x_3 < 0$, onto $\mathbb{S}^3$.
This is the one-dimensional manifold
\begin{equation}
\Gamma = \left\{ \zeta(\theta) \,\big|\, \theta \in \left( \tfrac{\pi}{2}, \tfrac{3 \pi}{2} \right) \right\},
\label{eq:Gamma}
\nonumber
\end{equation}
where $\zeta : \left( \tfrac{\pi}{2}, \tfrac{3 \pi}{2} \right) \to \mathbb{S}^3$ is the function
\begin{equation}
\zeta(\theta) = \big( 0,0,\cos(\theta),\sin(\theta) \big).
\nonumber
\end{equation}
Fig.~\ref{fig:ex4d2b} shows the induced return map on $\Gamma$, call it $G(\theta)$.
The map $G$ is unimodal over the range of $\theta$-values shown
and appears to have a chaotic attractor corresponding the orbit shown in Fig.~\ref{fig:ex4d2a}
(from numerical simulations we estimate its Lyapunov exponent to be about $0.22$).

To clarify, given $\theta \in \left( \tfrac{\pi}{2}, \tfrac{3 \pi}{2} \right)$ the value $G(\theta)$
is such that $\zeta(G(\theta))$ is the next intersection of $\chi_t(\zeta(\theta))$ with $\Gamma$.
We write $\zeta(G(\theta)) = \chi_T(\zeta(\theta))$ where $T = T(\theta) > 0$ is the corresponding evolution time.
To then characterise the amount by which the corresponding non-projected orbit
heads towards or away from $\bO$, 
we define $D : \left( \tfrac{\pi}{2}, \tfrac{3 \pi}{2} \right) \to \mathbb{R}$ by $D(\theta) = r_T(\zeta(\theta))$.
In view of the time-scaled linear homogeneity property of \eqref{eq:4d},
the function $D$ gives the change in norm as an orbit of \eqref{eq:4d}
undergoes one excursion from any point on $\Sigma_{\rm tang}$ back to $\Sigma_{\rm tang}$.

Numerically we found that the average value of $D$ over the projected orbit $\chi_t(x)$
shown in Fig.~\ref{fig:ex4d2a} is about $0.54$.
Since this value is less than $1$, the corresponding orbit $\psi_t(x)$ of \eqref{eq:4d}
converges to the origin as $t \to \infty$ (as evident in Fig.~\ref{fig:ex4d2c}).
However, this does not imply $\bO$ is asymptotically stable.
The map $G$ has many invariant sets (presumably an infinity of periodic solutions dense in some open subset of $\left( \tfrac{\pi}{2}, \tfrac{3 \pi}{2} \right)$).
If the average value of $D$ is greater than $1$ for any of these then $\bO$ is unstable.
As with piecewise-linear maps \cite{Si20d},
for a system similar to \eqref{eq:4d} it is presumably possible for $\bO$ to be unstable even if almost all Filippov solutions converge to $\bO$.
In this situation it is not clear what computational method would effectively establish that $\bO$ is indeed unstable.

\section{Discussion}
\label{sec:conc}
\setcounter{equation}{0}

The results of this paper are motivated by a desire to determine
whether or not a boundary equilibrium of a given Filippov system $f$ is stable.
Together Theorems \ref{th:expStabEquiv} and \ref{th:asyStabImpliesExpStab}
tell us that, assuming $c_1 < 0$, if $x^*$ is asymptotically stable for the reduced system $F$
then it is also asymptotically stable for the original system $f$.
But what if $x^*$ is not asymptotically stable for $F$?
Does this imply $x^*$ is not asymptotically stable for $f$?
The answer is no (a counterexample is readily constructed by letting $A$ be the zero matrix in \eqref{eq:F}).
However, this may be true if asymptotic stability is replaced by Lyapunov stability:

\begin{conjecture}
If the equilibrium $x = \bO$ is unstable (i.e.~not Lyapunov stable) for $F$ \eqref{eq:F},
then it is also unstable for $f$ \eqref{eq:f}.
\label{cj:Lyap}
\end{conjecture}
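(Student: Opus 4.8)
The plan is to argue by contradiction: assume $\bO$ is unstable for $F$ but Lyapunov stable for $f$, and derive a contradiction. First reduce to $c_1<0$. If $c_1>0$ then, as observed after Theorem~\ref{th:expStabEquiv}, $\bO$ is unstable for both systems and there is nothing to prove; the boundary case $c_1=0$ is genuinely different and must be excluded, since \eqref{eq:c10} has $\bO$ unstable for $F$ yet exponentially stable for $f$. So assume $c_1<0$. Then $F$ generates a unique semi-flow $\psi_t$ (Theorem~\ref{th:FilippovUniqueness}), depending continuously on the initial point, and by Lemma~\ref{le:discTimeScaling} together with \S\ref{sec:lh} it is \emph{time-scaled linearly homogeneous}: for each $\gamma>0$ there is a strictly increasing reparametrisation $r$ of $[0,\infty)$ with $\tfrac1\gamma\psi_s(\gamma x)=\psi_{r(s)}(x)$. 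Hence $R(x):=\sup_{s\ge0}\|\psi_s(x)\|$ obeys $R(\gamma x)=\gamma R(x)$, so $\bO$ is Lyapunov stable for $F$ exactly when $R$ is bounded on the unit sphere; by assumption it is not. Moreover, since $c_1<0$ confines Filippov solutions of $F$ to $\Omega_L\cup\Sigma$ and they never leave it, every orbit is, after a transient of bounded duration, governed by the genuinely linearly homogeneous piecewise-linear-with-sliding system \eqref{eq:dxds}, whose projectivisation is an autonomous flow on the sphere with a sliding component on $\Sigma\cap\{\|u\|=1\}$.

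By homogeneity, the instability of $F$ is scale-free: for every $\ee>0$ and every $\delta>0$ there is a point of $B_\delta(\bO)$ whose $F$-orbit leaves $B_\ee(\bO)$. The mechanism for transferring such escape to $f$ is the blow-up argument of \S\ref{sec:mainProof}. Given $\gamma>0$, rescale $f$ spatially by $\gamma$ and then rescale its right-hand piece by $\gamma$ (as in the passage from \eqref{eq:fScaledProof} to \eqref{eq:fCheckProof}); because the higher-order terms in \eqref{eq:fLfR} are $\co$ on the left and $\cO$ on the right, the resulting system converges to $F$ uniformly on every bounded set as $\gamma\to0$. Consequently, if an $F$-orbit starting on the unit sphere grows in norm by a definite factor $\rho>1$ over some time $T$, then by Theorem~\ref{th:FilippovContinuity} (applied on a bounded $\Omega$) the corresponding orbit of the rescaled $f$-system grows by at least $\tfrac12(1+\rho)$ over a time at most $T$, for every $\gamma$ below a threshold depending only on $\rho$ and $T$; undoing the scaling and applying Lemma~\ref{le:discTimeScaling} converts this into growth by the same factor of the genuine $f$-orbit from a point of $B_\gamma(\bO)$.

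To obtain instability of $f$, one fixed radius $\ee_*>0$ must be escaped by $f$-orbits starting arbitrarily close to $\bO$; this requires concatenating $\mathcal{O}(\log(\ee_*/\gamma))$ amplification steps, each carried out by re-blowing-up at the current, still small, scale. The concatenation closes uniformly in $\gamma$ precisely when each step can be performed with a \emph{fixed} time horizon $T$ and returns the orbit to a direction from which the step applies again --- for instance if $F$ admits an orbit with $\psi_T(x_0)=\rho x_0$ for some $T>0$ and $\rho>1$ (equivalently, if the projectivised flow of \eqref{eq:dxds} has a closed orbit along which the radial coordinate strictly increases). The crux of the proof, and the reason the statement is only conjectured, is to deduce such a structure --- or at least an escape mechanism with an a priori bound on the time per excursion --- from the bare instability of $F$.

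I expect this to be the main obstacle. For a linear system instability does produce such an expanding spiral (an eigenvalue with positive real part); the difficulty is the degenerate possibility of \emph{non-exponential} instability, in which the radial coordinate grows only sub-exponentially along every escaping orbit, no expanding closed orbit exists, and the naive shadowing estimate diverges. Such a case would have to be analysed directly, and if it can occur for \eqref{eq:dxds} then higher-order terms of $f$ could in principle stabilise $\bO$, so the conjecture would require refinement --- exactly as it already does, for a different reason, when $c_1=0$. A complete proof therefore rests on either showing that every instability of \eqref{eq:dxds} is exponential --- plausibly via a Floquet-type analysis of its projectivised flow, using that the $\Omega_L$-regime and the sliding regime each contribute linear transition maps --- or on constructing a direct escaping-solution argument for $f$ with explicit control of the time spent per excursion.
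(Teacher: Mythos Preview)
The statement is a \emph{conjecture} in the paper; no proof is offered, only the heuristic that an unstable manifold of $F$ should persist in $f$. So there is no paper-proof to compare against, and you do not claim to have one either --- you sketch a strategy and correctly isolate its obstruction.

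Your analysis is sound on the main points. You rightly observe that the conjecture as literally stated fails when $c_1=0$: the paper's own example \eqref{eq:c10} has $\bO$ unstable for $F$ (orbits in $\Omega_R$ drift to infinity along lines of constant $x_1$) yet exponentially stable for $f$, so any proof must carry the hypothesis $c_1<0$. For $c_1<0$, your blow-up-and-concatenate scheme is the natural continuation of the machinery in \S\ref{sec:mainProof}, and the gap you name --- that bare instability of $F$ need not yield a projectively periodic escaping orbit, or more generally any uniform per-excursion growth rate and bounded excursion time --- is exactly the obstacle that keeps this a conjecture rather than a theorem.

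One small correction to your closing remark: the transition maps contributed by the $\Omega_L$ excursion and the sliding segment are homogeneous of degree one (because the transit times depend only on direction, by the time-scaled homogeneity of \S\ref{sec:lh}) but they are not \emph{linear} --- the transit times vary with direction, so $x\mapsto\re^{A\tau(x)}x$ is nonlinear. Hence a straight Floquet argument does not apply, and the projectivised return map can be genuinely nonlinear; the chaotic example of \S\ref{sec:chaos} illustrates just how complicated this map can be already in four dimensions. This does not undermine your diagnosis of the difficulty, but it does suggest that establishing ``instability of $F$ is always exponential'' may itself be hard.
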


As with piecewise-linear maps \cite{Si20d},
Conjecture \ref{cj:Lyap} stems from the observation that if $F$ has an orbit emanating from the origin
then we expect $f$ to have an analogous orbit emanating from the origin at the same asymptotic rate.
This essentially claims that an unstable manifold in $F$ is also present in $f$.
By combining Theorems \ref{th:expStabEquiv} and \ref{th:asyStabImpliesExpStab} and Conjecture \ref{cj:Lyap}
we obtain Fig.~\ref{fig:stabilityImplicationChart} that summarises the implications between
the different types of stability for $f$ and $F$.

\begin{figure}[b!]
\begin{center}
\setlength{\unitlength}{1cm}
\begin{picture}(10.8,8.1)
\put(0,0){\includegraphics[height=8.1cm]{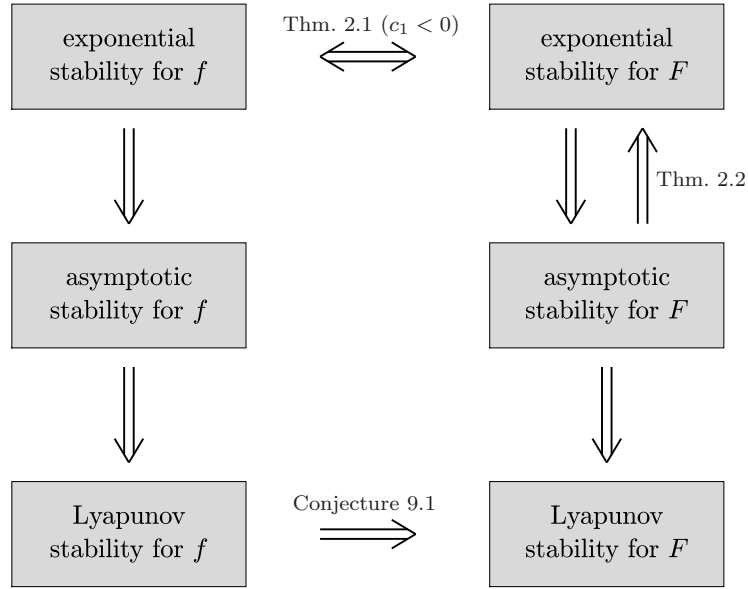}}
\put(4.4,1.2){\scriptsize Conjecture \ref{cj:Lyap}}
\put(4.27,7.57){\scriptsize Thm.~\ref{th:expStabEquiv} ($c_1 < 0$)}
\put(9.23,5.5){\scriptsize Thm.~\ref{th:asyStabImpliesExpStab}}
\end{picture}
\caption{
Implications between the three types of stability listed in Definition \ref{df:stable}
for a boundary equilibrium of a Filippov system $f$ of the form \eqref{eq:f}
and its corresponding local approximation $F$ \eqref{eq:F}.
For any system exponential stability implies asymptotic stability
and asymptotic stability implies Lyapunov stability.
Theorems \ref{th:expStabEquiv} and \ref{th:asyStabImpliesExpStab} and Conjecture \ref{cj:Lyap} (if true)
provide additional implications as shown.
Adjoining implications can be composed,
for example to see that asymptotic stability for $F$ implies asymptotic stability for $f$
(but the converse is not necessarily true).
\label{fig:stabilityImplicationChart}
}
\end{center}
\end{figure}

As a final comment, almost all $n$-dimensional systems of the form \eqref{eq:F} with $c_1 < 0$ can be reduced, via an affine coordinate change,
to the normal form of \cite{Si18d} that involves $2 n - 1$ parameters.
In the case $n=3$, a numerically tractable and likely insightful project would be
to describe the subset of five-dimensional parameter space within which $\bO$ is stable.
In the case $n=4$, it would be useful to understand how the stability changes as the numbers in \eqref{eq:4d} are varied.
However, as mentioned above, for this system it is not clear what numerical method would most effectively characterise stability.

\appendix

\section{Proof of Lemma \ref{le:uniformConvergence}}
\label{app:uniformConvergence}
\setcounter{equation}{0}

\begin{proof}
By asymptotic stability there exists $\delta > 0$ such that $\varphi_t(x) \to \bO$ as $t \to \infty$ for all $x \in \overline{B}_\delta(\bO)$.
Choose any $\ee > 0$.
Since $\bO$ is Lyapunov stable, there exists $\delta_1 > 0$ such that
\begin{equation}
\varphi_t(x) \in B_\ee(\bO),
\quad \text{for all~} x \in B_{\delta_1}(\bO) \text{~and all~} t \ge 0.
\label{eq:uniformConvergenceProof1}
\end{equation}
For any $x \in \overline{B}_\delta(\bO)$ there exists $\hat{T}(x) \ge 0$ such that
\begin{equation}
\varphi_{\hat{T}(x)}(x) \in B_{\frac{\delta_1}{2}}(\bO).
\label{eq:uniformConvergenceProof2}
\end{equation}
Since $\varphi_t(x)$ is a continuous function there exists $\eta(x) > 0$ such that
\begin{equation}
\big\| \varphi_{\hat{T}(x)}(x) - \varphi_{\hat{T}(x)}(y) \big\| < \frac{\delta_1}{2},
\quad \text{for all~} y \in B_{\eta(x)}(x) \cap \overline{B}_\delta(\bO).
\label{eq:uniformConvergenceProof3}
\end{equation}
The balls $B_{\eta(x)}(x)$ cover the compact set $\overline{B}_\delta(\bO)$,
thus there exists a subcover using points $x_1,\ldots,x_m \in \overline{B}_\delta(\bO)$.
Let $T = \max \left[ \hat{T}(x_1),\ldots,\hat{T}(x_m) \right]$ and choose any $x \in \overline{B}_\delta(\bO)$.
Let $i \in \{ 1,\ldots,m \}$ be such that $x \in B_{\eta(x_i)}(x_i)$.
Then
\begin{equation}
\big\| \varphi_{\hat{T}(x_i)}(x) \big\|
\le \big\| \varphi_{\hat{T}(x_i)}(x) - \varphi_{\hat{T}(x_i)}(x_i) \big\|
+ \big\| \varphi_{\hat{T}(x_i)}(x_i) \big\|
< \frac{\delta_1}{2} + \frac{\delta_1}{2} = \delta_1 \,,
\nonumber
\end{equation}
where we have used \eqref{eq:uniformConvergenceProof2} and \eqref{eq:uniformConvergenceProof3}.
By \eqref{eq:uniformConvergenceProof1} and the group property of the semi-flow \eqref{eq:semiFlowGroupProperty} we have
$\varphi_t(x) = \varphi_{t - \hat{T}(x_i)} \big( \varphi_{\hat{T}(x_i)}(x) \big) \in B_\ee(\bO)$
for all $t \ge T$.
\end{proof}


{\footnotesize

}

\end{document}